\numberwithin{equation}{section}
\numberwithin{equation}{section}
\theoremstyle{plain}
\newtheorem{theorem}[equation]{Theorem}
\newtheorem{conjecture}[equation]{Conjecture}
\newtheorem{lemma}[equation]{Lemma}
\newtheorem{corollary}[equation]{Corollary}
\newtheorem{proposition}[equation]{Proposition}
\theoremstyle{definition}
\newtheorem{definition}[equation]{Definition}
\newtheorem{remark}[equation]{Remark}
\theoremstyle{remark}
\newcommand{\R}{\mathbb{R}}
\newcommand{\C}{\mathbb{C}}
\newcommand{\B}{\mathbb{B}}
\newcommand{\uhp}{\mathbb{H}}
\newcounter{alphabet}
\newcounter{minutes}\setcounter{minutes}{\time}
\newcounter{hours}\setcounter{hours}{\time}
\begin{document}
\bibliographystyle{amsplain}
\title[Introducing a new intrinsic metric]
{
Introducing a new intrinsic metric
}

\def\thefootnote{}
\footnotetext{
\texttt{\tiny File:~\jobname .tex,
          printed: \number\year-\number\month-\number\day,
          \thehours.\ifnum\theminutes<10{0}\fi\theminutes}
}
\makeatletter\def\thefootnote{\@arabic\c@footnote}\makeatother

\author[O. Rainio]{Oona Rainio}
\address{Department of Mathematics and Statistics, University of Turku, FI-20014 Turku, Finland}
\email{ormrai@utu.fi}
\author[M. Vuorinen]{Matti Vuorinen}
\address{Department of Mathematics and Statistics, University of Turku, FI-20014 Turku, Finland}
\email{vuorinen@utu.fi}

\keywords{Hyperbolic geometry, intrinsic geometry, intrinsic metrics, quasiconformal mappings, triangular ratio metric.}
\subjclass[2010]{Primary 51M10; Secondary 30C65}
\begin{abstract}
A new intrinsic metric called $t$-metric is introduced. Several sharp inequalities between this metric and the most common hyperbolic type metrics are proven for various domains $G\subsetneq\R^n$. The behaviour of the new metric is also studied under a few examples of conformal and quasiconformal mappings, and the differences between the balls drawn with all the metrics considered are compared by both graphical and analytical means.  
\end{abstract}
\maketitle

\section{Introduction}

In geometric function theory, one of the topics studied deals with the variation of geometric entities such as distances, ratios of distances, local geometry and measures of sets under different mappings. For such studies, we need an appropriate notion of distance that is compatible with the class of mappings studied. In classical function theory of the complex plane, one of the key concepts is the hyperbolic distance, which measures not only how close the points are to each other but also how they are located inside the domain with respect to its boundary.

The hyperbolic distance also serves as a model when we need generalisations to subdomains $G$ of arbitrary metric spaces $X$. These generalized distances behave like the hyperbolic metric in the aspect that they define the Euclidean topology and, in particular, we can cover compact subsets of $G$ using balls of the generalized metrics. Thus, the boundary of the domain has a strong influence on the inner geometry of the domain defined by some chosen metric.

Since the classical hyperbolic geometry acts as a model, some of its key features are inherited by the generalizations but not all. For instance, it is desirable to study local behaviour of functions and we need to have a metric that is locally comparable with the Euclidean geometry. Such a metric is here called an \emph{intrinsic metric}. Note that there is no established definition for this concept and it is sometimes required, for instance, that the closures of the balls defined with an intrinsic metric never intersect the boundary of the domain. An example of an intrinsic metric is the following new metric, on which this work focuses.

\begin{definition}\label{def_1}
Let $G$ be some non-empty, open, proper and connected subset of a metric space $X$. Choose some metric $\eta_G$ defined in the closure of $G$ and denote $\eta_G(x)=\eta_G(x,\partial G)=\inf\{\eta_G(x,z)\text{ }|\text{ }z\in \partial G\}$ for all $x\in G$. The \emph{$t$-metric} for a metric $\eta_G$ in a domain $G$ is a function $t_G:G\times G\to[0,1],$
\begin{align*}
t_G(x,y)=\frac{\eta_G(x,y)}{\eta_G(x,y)+\eta_G(x)+\eta_G(y)},  \end{align*}
for all $x,y\in G$. Here, we mostly focus on the special case where $G\subsetneq\R^n$ and $\eta_G$ is the Euclidean distance.
\end{definition}

Our work in this paper is motivated by the research of several other mathematicians. During the past thirty years, many intrinsic metrics have been introduced and studied \cite{chkv, fmv, hkvbook, imsz, ms}. It is noteworthy that each metric might be used to discover some intricate features of mappings not detected by other metrics. Since our new metric differs slightly from other intrinsic metrics and has a relatively simple definition, it could potentially be a great help for new discoveries about intrinsic geometry of domains. For instance, there is one inequality that is an open question for the triangular ratio metric, but could potentially be proved for the $t$-metric, see Conjecture \ref{conj_tmobius} and Remark \ref{rmk_sbmobius}.

Unlike several other hyperbolic type metrics, such as the triangular ratio metric or the hyperbolic metric itself, the $t$-metric does not have the property about the closed balls never intersecting with the boundary, see Theorem \ref{thm_ttouchesboundary}. This is an interesting aspect for this metric clearly fulfills most of the others, if not all, properties of a hyperbolic type metrics listed in \cite[p. 79]{hkvbook}. Consequently, we have found an intrinsic metric that does not have one of the common properties of hyperbolic type metrics.

In this paper, we will study this new metric and its connection to other metrics. In Section \ref{sect_tbounds}, we prove that the function of Definition \ref{def_1} is really a metric and find the sharp inequalities between this metric and several hyperbolic type metrics, including also the hyperbolic metric, in different domains. In Section \ref{sect_qmaps}, we show how the $t$-metric behaves under certain quasiconformal mappings and find the Lipschitz constants for M\"obius maps between balls and half-spaces. Finally, in Section \ref{sect_metricballs}, we draw $t$-metric disks and compare their certain properties to those of other metric disks.

{\bf Acknowledgements.} The research of the first author was supported by Finnish Concordia Fund.

\section{Preliminaries}

In this section, we will introduce the definitions of a few different metrics and metric balls that will be necessary later on but, first, let us recall the definition of a metric.

\begin{definition}\label{def_metric}
For any non-empty space $G$, a \emph{metric} is a function $\eta_G:G\times G\to[0,\infty)$ that fulfills the following three conditions for all $x,y,z\in G$:\\
\emph{(1)} Positivity: $\eta_G(x,y)\geq 0$, and $\eta_G(x,y)=0$ if and only if $x=y$,\\
\emph{(2)} Symmetry: $\eta_G(x,y)=\eta_G(y,x)$,\\
\emph{(3)} Triangle inequality: $\eta_G(x,y)\leq \eta_G(x,z)+\eta_G(z,y).$
\end{definition}

Let $\eta_G$ be now some arbitrary metric. An open ball defined with it is $B_\eta(x,r)=\{y\in G\text{ }|\text{ }\eta_G(x,y)<r\}$ and the corresponding closed ball is $\overline{B}_\eta(x,r)=\{y\in G\text{ }|\text{ }\eta_G(x,y)\leq r\}$. Denote the sphere of these balls by $S_\eta(x,r)$. For Euclidean metric, these notations are $B^n(x,r)$, $\overline{B}^n(x,r)$ and $S^{n-1}(x,r)$, respectively, where $n$ is the dimension. In this paper, the unit ball $\B^n=B^n(0,1)$, the upper half-plane $\uhp^n=\{(x_1,...,x_n)\in\R^n\text{ }|\text{ }x_n>0\}$ and the open sector $S_\theta=\{x\in\C\text{ }|\text{ }0<\arg(x)<\theta\}$ with an angle $\theta\in(0,2\pi)$ will be commonly used as domains $G$. Note also that the unit vectors will be denoted by $\{e_1,...,e_n\}$.

Let us now define the metrics needed for a domain $G\subsetneq\R^n$. Denote the Euclidean distance between the points $x,y$ by $|x-y|$ and let $d_G(x)=\inf\{|x-z|\text{ }|\text{ }z\in\partial G\}$. Suppose that the $t$-metric is defined with the Euclidean distance so that 
\begin{align*}
t_G(x,y)=\frac{|x-y|}{|x-y|+d_G(x)+d_G(y)}  
\end{align*}
for all $x,y\in G$, if not otherwise specified.

The following hyperbolic type metrics will be considered:\newline
The triangular ratio metric: $s_G:G\times G\to[0,1]$,
\begin{align*}
s_G(x,y)=\frac{|x-y|}{\inf_{z\in\partial G}(|x-z|+|z-y|)}, 
\end{align*}
the $j^*_G$-metric: $j^*_G:G\times G\to[0,1],$
\begin{align*}
j^*_G(x,y)=\frac{|x-y|}{|x-y|+2\min\{d_G(x),d_G(y)\}},    
\end{align*}
and the point pair function: $p_G:G\times G\to[0,1],$
\begin{align*}
p_G(x,y)=\frac{|x-y|}{\sqrt{|x-y|^2+4d_G(x)d_G(y)}}.   
\end{align*}
Out of these hyperbolic type metrics, the triangular ratio metric was studied by P. H\"ast\"o in 2002 \cite{h}, and the two other metrics are more recent. As pointed out in \cite{hvz}, the $j^*_G$-metric is derived from the \emph{distance ratio metric} found by F.W. Gehring and B.G. Osgood in \cite{GO79}. Note that there are proper domains $G$ in which the point pair function is not a metric \cite[Rmk 3.1 p. 689]{chkv}.

Define also the hyperbolic metric as
\begin{align*}
\text{ch}\rho_{\uhp^n}(x,y)&=1+\frac{|x-y|^2}{2d_{\uhp^n}(x)d_{\uhp^n}(y)},\quad x,y\in\uhp^n,\\
\text{sh}^2\frac{\rho_{\B^n}(x,y)}{2}&=\frac{|x-y|^2}{(1-|x|^2)(1-|y|^2)},\quad x,y\in\B^n
\end{align*}
in the upper half-plane $\uhp^n$ and in the Poincar\'e unit disk $\B^n$ \cite[(4.8), p. 52 \& (4.14), p. 55]{hkvbook}. In the two-dimensional space,
\begin{align*}
\text{th}\frac{\rho_{\uhp^2}(x,y)}{2}&=\text{th}\left(\frac{1}{2}\log\left(\frac{|x-\overline{y}|+|x-y|}{|x-\overline{y}|-|x-y|}\right)\right)=\left|\frac{x-y}{x-\overline{y}}\right|,\\
\text{th}\frac{\rho_{\B^2}(x,y)}{2}&=\text{th}\left(\frac{1}{2}\log\left(\frac{|1-x\overline{y}|+|x-y|}{|1-x\overline{y}|-|x-y|}\right)\right)=\left|\frac{x-y}{1-x\overline{y}}\right|=\frac{|x-y|}{A[x,y]},
\end{align*}
where $\overline{y}$ is the complex conjugate of $y$ and $A[x,y]=\sqrt{|x-y|^2+(1-|x|^2)(1-|y|^2)}$ is the Ahlfors bracket \cite[(3.17) p. 39]{hkvbook}.

Note that the following inequalities hold for the hyperbolic type metrics.

\begin{lemma}\label{jp_bounds}
\emph{\cite[Lemma 2.3, p. 1125]{hvz}} For a proper subdomain $G$ of $\R^n$, the inequality $j^*_G(x,y)\leq p_G(x,y)\leq\sqrt{2}j^*_G(x,y)$ holds for all $x,y\in G$.
\end{lemma}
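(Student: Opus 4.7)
Fix $x, y \in G$ with $x \neq y$ (the case $x=y$ is trivial) and set $a = |x-y|$, $u = d_G(x)$, $v = d_G(y)$. By symmetry of both $j^*_G$ and $p_G$ in $x$ and $y$, I may assume without loss of generality that $u \leq v$, so that $\min\{d_G(x), d_G(y)\} = u$. Cancelling the common positive factor $a$ from each numerator and clearing denominators, the two desired inequalities
\[
\frac{a}{a + 2u} \;\leq\; \frac{a}{\sqrt{a^2 + 4uv}} \;\leq\; \sqrt{2}\cdot \frac{a}{a+2u}
\]
become equivalent to the pair
\begin{equation*}
\sqrt{a^2 + 4uv} \;\leq\; a + 2u \quad \text{and} \quad (a+2u)^2 \;\leq\; 2(a^2 + 4uv).
\end{equation*}

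For the left-hand inequality, squaring and simplifying reduces it to $v \leq a + u$, which is exactly the $1$-Lipschitz property of the distance-to-boundary function $d_G$: namely $d_G(y) \leq d_G(x) + |x-y|$. This follows immediately from the triangle inequality applied to any boundary point of $G$.

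For the right-hand inequality, expanding $(a+2u)^2 = a^2 + 4au + 4u^2$ and subtracting from $2a^2 + 8uv$ yields
\[
2(a^2 + 4uv) - (a+2u)^2 \;=\; a^2 - 4au + 4u^2 + 8u(v-u) \;=\; (a - 2u)^2 + 8u(v-u),
\]
which is manifestly nonnegative since $v \geq u \geq 0$. This completes the proof. The only substantive step is recognizing the rearrangement in the second inequality as a sum of two squares; the first inequality is a direct consequence of the Lipschitz property of $d_G$, so I anticipate no serious obstacle.
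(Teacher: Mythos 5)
Your proof is correct. Note that the paper itself offers no proof of this lemma at all---it is imported verbatim from \cite{hvz} with a citation---so there is no in-paper argument to compare against; your contribution is a correct, self-contained replacement. Both steps check out: after the WLOG reduction $u\leq v$ (legitimate, since $j^*_G$ and $p_G$ are symmetric), the left inequality squares to $4uv\leq 4u(a+u)$, which indeed follows from the $1$-Lipschitz bound $v\leq a+u$ (one small wording quibble: it \emph{follows from} rather than \emph{reduces to} that bound, since for $u=0$ the squared inequality is $0\leq 0$ regardless of $v$; the logic is unaffected because the implication runs in the direction you need). The right inequality is settled by your identity
\begin{align*}
2(a^2+4uv)-(a+2u)^2=(a-2u)^2+8u(v-u)\geq 0,
\end{align*}
which I verified by direct expansion. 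This sum-of-squares rearrangement also shows, as a free by-product, when equality can occur in the upper bound ($a=2u$ and $u=v$), which is the configuration underlying the sharpness of the constant $\sqrt{2}$ in \cite{hvz}---worth recording if you ever need the sharpness, though the lemma as stated does not claim it.
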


\begin{lemma}\label{sj_properbounds}
\emph{\cite[Lemma 2.1, p. 1124 \& Lemma 2.2, p. 1125]{hvz}} For a proper subdomain $G$ of $\R^n$, the inequality $j^*_G(x,y)\leq s_G(x,y)\leq2j^*_G(x,y)$ holds for all $x,y\in G$.
\end{lemma}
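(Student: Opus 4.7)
The plan is to rewrite both inequalities so that, since $|x-y|$ appears in the numerator of both $s_G(x,y)$ and $j^*_G(x,y)$, each reduces to a comparison of denominators. Specifically, $j^*_G\leq s_G$ is equivalent to
\begin{equation*}
\inf_{z\in\partial G}\bigl(|x-z|+|z-y|\bigr)\;\leq\;|x-y|+2\min\{d_G(x),d_G(y)\},
\end{equation*}
and $s_G\leq 2j^*_G$ is equivalent to
\begin{equation*}
2\inf_{z\in\partial G}\bigl(|x-z|+|z-y|\bigr)\;\geq\;|x-y|+2\min\{d_G(x),d_G(y)\}.
\end{equation*}
Both amount to exercises in the triangle inequality, so no sharp geometric insight is needed.

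For the left-hand inequality, I would assume without loss of generality that $d_G(x)\leq d_G(y)$ and choose a sequence $z_k\in\partial G$ with $|x-z_k|\to d_G(x)$. The triangle inequality gives $|x-z_k|+|z_k-y|\leq 2|x-z_k|+|x-y|$, and passing to the limit yields $\inf_{z\in\partial G}(|x-z|+|z-y|)\leq 2d_G(x)+|x-y|$, which is exactly what is required.

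For the right-hand inequality, I would fix an arbitrary $z\in\partial G$ and use two bounds simultaneously: on the one hand $|x-z|+|z-y|\geq|x-y|$, and on the other hand $|x-z|+|z-y|\geq d_G(x)+d_G(y)\geq 2\min\{d_G(x),d_G(y)\}$. Adding these two inequalities gives $2(|x-z|+|z-y|)\geq|x-y|+2\min\{d_G(x),d_G(y)\}$. Taking the infimum over $z\in\partial G$ then produces the desired bound.

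The only mild obstacle is keeping track of which quantity sits in the numerator vs.\ denominator when translating between inequalities for $s_G,j^*_G$ and inequalities for their denominators (since larger denominator means smaller metric value). Once that bookkeeping is fixed, both directions follow immediately from the triangle inequality and the definition $d_G(x)=\inf_{z\in\partial G}|x-z|$, so I would not expect any real difficulty.
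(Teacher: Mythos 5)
Your proof is correct: both directions reduce, as you say, to denominator comparisons settled by the triangle inequality (nearest-boundary-point sequence for $j^*_G\leq s_G$; adding $|x-z|+|z-y|\geq|x-y|$ and $|x-z|+|z-y|\geq d_G(x)+d_G(y)\geq 2\min\{d_G(x),d_G(y)\}$ for $s_G\leq 2j^*_G$). The paper itself gives no proof, citing \cite[Lemmas 2.1--2.2]{hvz} instead, and your argument is essentially the standard one given there.
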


\begin{lemma}\label{rhojsp_inG}
\emph{\cite[p. 460]{hkvbook}} For all $x,y\in G\in\{\uhp^n,\B^n\}$,
\begin{align*}
&(1)\quad{\rm th}\frac{\rho_{\uhp^n}(x,y)}{4}\leq j^*_{\uhp^n}(x,y)\leq s_{\uhp^n}(x,y)= p_{\uhp^n}(x,y)={\rm th}\frac{\rho_{\uhp^n}(x,y)}{2}\leq2{\rm th}\frac{\rho_{\uhp^n}(x,y)}{4},\\
&(2)\quad{\rm th}\frac{\rho_{\B^n}(x,y)}{4}\leq j^*_{\B^n}(x,y)\leq s_{\B^n}(x,y)\leq p_{\B^n}(x,y)\leq{\rm th}\frac{\rho_{\B^n}(x,y)}{2}\leq2{\rm th}\frac{\rho_{\B^n}(x,y)}{4}.    
\end{align*}
\end{lemma}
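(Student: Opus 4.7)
The plan is to reduce each chain to a small number of explicit identities and elementary inequalities, treating the equalities in part (1) as the core computation and obtaining part (2) from them either by a M\"obius transfer or by direct algebra on the Ahlfors bracket.

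For part (1), I would first observe that for $x,y\in\uhp^n$ with reflection $\bar y=(y_1,\dots,y_{n-1},-y_n)$, the identity $\text{th}(\rho_{\uhp^n}(x,y)/2)=|x-y|/|x-\bar y|$ extends from the $2$-dimensional formula in the excerpt to arbitrary $n$ by combining $\text{ch}\,\rho_{\uhp^n}(x,y)=1+|x-y|^2/(2 d_{\uhp^n}(x)d_{\uhp^n}(y))$ with the half-angle identity $\text{sh}^2(u/2)=(\text{ch}\,u-1)/2$. The classical reflection principle yields $\inf_{z\in\partial\uhp^n}(|x-z|+|z-y|)=|x-\bar y|$, so $s_{\uhp^n}(x,y)=|x-y|/|x-\bar y|$. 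Since $d_{\uhp^n}(x)=x_n$, a direct expansion gives $|x-\bar y|^2=|x-y|^2+4\,d_{\uhp^n}(x)d_{\uhp^n}(y)$, whence $p_{\uhp^n}$ has the same value, establishing the central equality $s_{\uhp^n}=p_{\uhp^n}=\text{th}(\rho_{\uhp^n}/2)$ in (1).

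For part (2), I would compare denominators in the formulas $\text{th}(\rho_{\B^n}(x,y)/2)=|x-y|/A[x,y]$ and $p_{\B^n}(x,y)=|x-y|/\sqrt{|x-y|^2+4(1-|x|)(1-|y|)}$. The bound $p_{\B^n}\leq\text{th}(\rho_{\B^n}/2)$ then reduces to $(1-|x|^2)(1-|y|^2)\leq 4(1-|x|)(1-|y|)$, i.e.\ $(1+|x|)(1+|y|)\leq 4$, which is immediate. For $s_{\B^n}\leq p_{\B^n}$, the cleanest route is to use a M\"obius automorphism of $\B^n$ sending $x$ to the origin; since each of $s_{\B^n}$, $p_{\B^n}$ and $\rho_{\B^n}$ is invariant under such maps, the inequality reduces to the radial case $x=0$, where it is an elementary one-variable check.

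The bounds $j^*\leq s$ and $j^*\leq p$ in both parts are Lemmas~\ref{sj_properbounds} and~\ref{jp_bounds} applied to the proper subdomains $\uhp^n,\B^n\subsetneq\R^n$. The leftmost inequality $\text{th}(\rho/4)\leq j^*$ comes from the algebraic identity $j^*_G(x,y)=\text{th}(j_G(x,y)/2)$ with $j_G(x,y)=\log(1+|x-y|/\min\{d_G(x),d_G(y)\})$, combined with the standard comparison $\rho_G\leq 2\,j_G$ valid in $G\in\{\uhp^n,\B^n\}$. The rightmost inequality $\text{th}(\rho/2)\leq 2\,\text{th}(\rho/4)$ is just the double-angle identity $\text{th}(2u)=2\,\text{th}(u)/(1+\text{th}^2u)\leq 2\,\text{th}(u)$. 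I expect the main obstacle to be $s_{\B^n}\leq p_{\B^n}$ if one insists on a brute-force approach: one must locate the minimiser $z^*\in S^{n-1}$ of $|x-z|+|z-y|$ (an ellipsoid-tangency problem) and then verify an intricate inequality in $1-|x|$, $1-|y|$ and the angle between $x$ and $y$. The M\"obius-invariance reduction sidesteps this at the cost of needing conformal invariance of $s_{\B^n}$, a standard but nontrivial fact that should be cited rather than reproved.
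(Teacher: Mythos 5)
Most of your proposal checks out, and it is worth noting that the paper offers no proof at all for this lemma --- it is quoted from \cite[p. 460]{hkvbook} --- so you are supplying an argument where the paper has only a citation. Your part (1) is correct: ${\rm ch}\,\rho_{\uhp^n}=1+|x-y|^2/(2x_ny_n)$ with the half-angle identity gives ${\rm th}^2(\rho_{\uhp^n}(x,y)/2)=|x-y|^2/(|x-y|^2+4x_ny_n)$, the expansion $|x-\overline{y}|^2=|x-y|^2+4x_ny_n$ and the reflection solution of Heron's problem then yield $s_{\uhp^n}=p_{\uhp^n}={\rm th}(\rho_{\uhp^n}/2)$. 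Likewise $p_{\B^n}\le{\rm th}(\rho_{\B^n}/2)$ does reduce to $(1+|x|)(1+|y|)\le4$, the identity $j^*_G={\rm th}(j_G/2)$ together with $\rho_G\le 2j_G$ in $\uhp^n$ and $\B^n$ gives the leftmost bound, the double-angle formula gives the rightmost one, and $j^*\le s$, $j^*\le p$ are Lemmas \ref{sj_properbounds} and \ref{jp_bounds}.

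The genuine gap is the step $s_{\B^n}\le p_{\B^n}$: your reduction assumes that $s_{\B^n}$ and $p_{\B^n}$ are invariant under M\"obius automorphisms of $\B^n$, and this is false --- only $\rho_{\B^n}$ is. Indeed, if $s_{\B^2}$ were invariant under $T_a$, the open question recalled in Remark \ref{rmk_sbmobius} (\cite[Conj. 1.6]{chkv}) would hold trivially with constant $1$. Worse, your argument proves too much: in the radial case one computes $\inf_{z\in S^{n-1}}(|z|+|z-y|)=2-|y|$, so $s_{\B^n}(0,y)=|y|/(2-|y|)=p_{\B^n}(0,y)$, and invariance would force $s_{\B^n}\equiv p_{\B^n}$ on all of $\B^n$; but for $x=-e_1/2$, $y=2e_1/5$ one finds $s_{\B^2}(x,y)=(9/10)/(19/10)=9/19\approx0.47$ (the minimizing boundary point is $-e_1$), while $p_{\B^2}(x,y)=9/\sqrt{201}\approx0.63$, so the two quantities genuinely differ. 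The repair uses your own part (1) instead of invariance: for $z\in S^{n-1}$ let $H_z=\{w\in\R^n\ |\ \langle w,z\rangle<1\}$ be the tangent half-space at $z$, so that $\B^n\subset H_z$ and $d_{H_z}(x)=1-\langle x,z\rangle\ge1-|x|=d_{\B^n}(x)$; then the half-space identity from part (1) gives $|x-z|+|z-y|\ge\sqrt{|x-y|^2+4d_{H_z}(x)d_{H_z}(y)}\ge\sqrt{|x-y|^2+4d_{\B^n}(x)d_{\B^n}(y)}$, and taking the infimum over $z\in S^{n-1}$ yields $s_{\B^n}(x,y)\le p_{\B^n}(x,y)$ (the same argument works in any convex domain). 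With this substitution the rest of your proposal assembles into a complete proof.
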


\section{t-Metric and Its Bounds}\label{sect_tbounds}

Now, we will prove that our new metric is truly a metric in the general case.

\begin{theorem}\label{thm_tismetric}
For any metric space $X$, a domain $G\subsetneq X$ and a metric $\eta_G$ defined in $G$, the function $t_G$ is a metric.
\end{theorem}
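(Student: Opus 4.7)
The proof splits into verifying the three metric axioms. Positivity: since $G$ is open with $\partial G\subset X\setminus G$, every $x\in G$ has $\eta_G(x)>0$, so the denominator $\eta_G(x,y)+\eta_G(x)+\eta_G(y)$ is always strictly positive. Hence $t_G(x,y)\ge 0$, with equality iff $\eta_G(x,y)=0$, iff $x=y$. Symmetry is inherited from $\eta_G$.

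The bulk of the proof is the triangle inequality $t_G(x,y)\le t_G(x,z)+t_G(z,y)$. Abbreviate $a=\eta_G(x,y)$, $b=\eta_G(x,z)$, $c=\eta_G(z,y)$ and $p=\eta_G(x)$, $q=\eta_G(y)$, $r=\eta_G(z)$. Two inputs are available: the metric inequality $a\le b+c$ for $\eta_G$, and the standard $1$-Lipschitz bound for the boundary distance, which gives $|p-r|\le b$ and $|q-r|\le c$. Multiplying through by the product of the three positive denominators, the desired inequality is equivalent to
\begin{equation*}
a\bigl[bc-(p+r)(q+r)\bigr] + (p+q)\bigl[2bc+bq+cp+r(b+c)\bigr] \ge 0.
\end{equation*}
The left-hand side is affine in $a$. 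If the bracket multiplying $a$ is non-negative the expression is a sum of non-negative terms; otherwise it is minimised over $a\in[0,b+c]$ at $a=b+c$, where after cancellation (the $(b+c)pq$- and $r(b+c)(p+q)$-contributions annihilate in pairs) it collapses to
\begin{equation*}
bc(b+c)+2bc(p+q)+bq^2+cp^2-(b+c)r^2.
\end{equation*}

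Using $r\le\min(p+b,q+c)$ from the $1$-Lipschitz bound, bound $r^2\le(p+b)^2$ in the case $p+b\le q+c$; then a direct expansion telescopes the previous display into the factored form $b\bigl[(q+c)^2-(p+b)^2\bigr]\ge 0$, while the symmetric case produces $c\bigl[(p+b)^2-(q+c)^2\bigr]\ge 0$. This finishes the triangle inequality. The main obstacle is this final algebraic reduction: because the denominators $p+q$, $p+r$, $q+r$ of the three $t_G$ values all differ, one cannot simply apply subadditivity of the scalar map $s\mapsto s/(s+k)$ for a fixed $k$, and the $1$-Lipschitz control on the boundary distance is precisely what fixes the sign of the last expression.
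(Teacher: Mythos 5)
Your proof is correct: I verified the cleared-denominator identity
\begin{align*}
b(a+p+q)(c+q+r)+c(a+p+q)(b+p+r)-a(b+p+r)(c+q+r)
= a\bigl[bc-(p+r)(q+r)\bigr]+(p+q)\bigl[2bc+bq+cp+r(b+c)\bigr],
\end{align*}
the legitimacy of minimising the affine function of $a$ at the endpoint $a=b+c$, the cancellation at that endpoint, and both factorisations $b\bigl[(q+c)^2-(p+b)^2\bigr]$ and $c\bigl[(p+b)^2-(q+c)^2\bigr]$; all the algebra checks out. But your route is genuinely different from, and much heavier than, the paper's. The paper uses exactly the same two inputs --- the triangle inequality $a\leq b+c$ and the $1$-Lipschitz property of the boundary distance in the one-sided form $r\leq\min\{b+p,c+q\}$ --- yet avoids all polynomial expansion: by monotonicity of $s\mapsto s\slash(s+k)$ with the \emph{fixed} value $k=p+q$,
\begin{align*}
\frac{a}{a+p+q}\leq\frac{b+c}{b+c+p+q}=\frac{b}{b+c+p+q}+\frac{c}{b+c+p+q}\leq\frac{b}{b+p+r}+\frac{c}{c+q+r},
\end{align*}
where the last step shrinks each denominator separately, using $c+q\geq r$ in the first term and $b+p\geq r$ in the second. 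So your closing remark --- that the three differing denominators block any argument through the scalar map $s\mapsto s\slash(s+k)$ --- overstates the obstacle: monotonicity with one fixed $k$, followed by a termwise denominator swap justified by the same Lipschitz bound, finishes in three lines. What your computation buys in exchange is an exact identity rather than a chain of one-sided estimates, and the factored endpoint value makes the remaining slack explicit; the paper's argument is the one that generalises, being abstracted verbatim into Theorem \ref{thm_cmetric}, where $p+q$ is replaced by an arbitrary symmetric function $c_G$ satisfying inequality \eqref{ineq_c}. Two small points of hygiene: your positivity argument needs $\partial G\neq\varnothing$ (automatic when $X$ is connected, since $G$ is open, nonempty and proper) so that $\eta_G(x)$ is finite and $t_G(x,x)=0$ is well defined, and it implicitly uses that $\eta_G$ is defined on the closure of $G$, as stipulated in Definition \ref{def_1}.
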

\begin{proof}
The function $t_G$ is a metric if it fulfills all the three conditions of Definition \ref{def_metric}. Trivially, the first two conditions hold. Consider now a function $f:[0,\infty)\to[0,\infty)$, $f(x)=x\slash(x+k)$ where $k>0$ is a constant. Since $f$ is increasing in its whole domain $[0,\infty)$, 
\begin{align*}
x\leq y\quad\Leftrightarrow\quad\frac{x}{x+k}\leq\frac{y}{y+k}. 
\end{align*}
Because $\eta_G$ is a metric, $\eta_G(x,y)\leq\eta_G(x,z)+\eta_G(z,y)$ for all $x,y,z\in G$. Furthermore, $\eta_G(z)\leq\min\{\eta_G(x,z)+\eta_G(x),\eta_G(z,y)+\eta_G(y)\}$. From these results, it follows that
\begin{align*}
t_G(x,y)&=\frac{\eta_G(x,y)}{\eta_G(x,y)+\eta_G(x)+\eta_G(y)}
\leq\frac{\eta_G(x,z)+\eta_G(z,y)}{\eta_G(x,z)+\eta_G(z,y)+\eta_G(x)+\eta_G(y)}\\
&=\frac{\eta_G(x,z)}{\eta_G(x,z)+\eta_G(z,y)+\eta_G(x)+\eta_G(y)}+\frac{\eta_G(z,y)}{\eta_G(x,z)+\eta_G(z,y)+\eta_G(x)+\eta_G(y)}\\
&\leq\frac{\eta_G(x,z)}{\eta_G(x,z)+\eta_G(x)+\eta_G(z)}+\frac{\eta_G(z,y)}{\eta_G(z,y)+\eta_G(y)+\eta_G(z)}
=t_G(x,z)+t_G(z,y)
\end{align*}
for all $x,y,z\in G$. Thus, $t_G$ fulfills the triangle inequality.
\end{proof}

We now show that the method of proof of Theorem \ref{thm_tismetric} can be used to prove that several other functions are metrics, too.

\begin{theorem}\label{thm_cmetric}
If $G$ is a proper subset of a metric space $X$, $\eta_G$ some metric defined in the closure of $G$ and $c_G:G\times G\to[0,\infty)$ some symmetric function such that, for all $x,y,z\in G$,
\begin{align}\label{ineq_c}
c_G(x,z)\leq\eta_G(z,y)+c_G(x,y),    
\end{align}
then any function $\phi_G:G\times G\to[0,1]$, defined as
\begin{align*}
\phi_G(x,x)=0,\quad
\phi_G(x,y)=\frac{\eta_G(x,y)}{\eta_G(x,y)+c_G(x,y)}\text{ if }x\neq y 
\end{align*}
for all $x,y\in G$, is a metric in the domain $G$.
\end{theorem}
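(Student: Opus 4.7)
The plan is to imitate the proof of Theorem \ref{thm_tismetric} almost line for line, treating the expression $c_G(x,y)$ as a general placeholder that plays the role $\eta_G(x)+\eta_G(y)$ played there. Positivity of $\phi_G$ is immediate from the definition: when $x\neq y$ we have $\eta_G(x,y)>0$ and $c_G(x,y)\geq 0$, so the quotient lies in $(0,1]$, while $\phi_G(x,x)=0$ by fiat. Symmetry of $\phi_G$ follows at once from the symmetry of $\eta_G$ and the assumed symmetry of $c_G$, so only the triangle inequality requires real work.

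For the triangle inequality, I would first invoke the elementary fact already used in the proof of Theorem \ref{thm_tismetric}, namely that $f(t)=t/(t+k)$ is increasing in $t$ for each fixed $k>0$, and decreasing in $k$ for each fixed $t\geq 0$. Applying the first monotonicity together with the triangle inequality $\eta_G(x,y)\leq \eta_G(x,z)+\eta_G(z,y)$ yields
\begin{align*}
\phi_G(x,y)\leq\frac{\eta_G(x,z)+\eta_G(z,y)}{\eta_G(x,z)+\eta_G(z,y)+c_G(x,y)},
\end{align*}
which I would then split into the sum of two fractions sharing the denominator on the right, with numerators $\eta_G(x,z)$ and $\eta_G(z,y)$ respectively.

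The second step is to use the hypothesis \eqref{ineq_c} (together with the second monotonicity above) to enlarge each denominator, thereby shrinking each fraction to something bounded by $\phi_G(x,z)$ or $\phi_G(z,y)$. Condition \eqref{ineq_c} applied verbatim gives $c_G(x,z)\leq\eta_G(z,y)+c_G(x,y)$, which bounds the first fraction by $\eta_G(x,z)/(\eta_G(x,z)+c_G(x,z))=\phi_G(x,z)$. For the second fraction I need $c_G(z,y)\leq\eta_G(x,z)+c_G(x,y)$; this follows from \eqref{ineq_c} with the variable roles $(x,y,z)\mapsto(y,x,z)$ together with the symmetry of both $c_G$ and $\eta_G$. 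Adding the two bounds delivers $\phi_G(x,y)\leq \phi_G(x,z)+\phi_G(z,y)$.

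The only point demanding any care is recognizing that the two denominator comparisons both come from a single symmetric form of \eqref{ineq_c}; apart from that small observation, the argument is essentially a verbatim transcription of the proof of Theorem \ref{thm_tismetric}, and I do not anticipate any genuine obstacle.
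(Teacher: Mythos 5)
Your proposal is correct and follows essentially the same route as the paper's own proof: the same monotonicity observation, the same splitting of $\frac{\eta_G(x,z)+\eta_G(z,y)}{\eta_G(x,z)+\eta_G(z,y)+c_G(x,y)}$ into two fractions, and the same two denominator enlargements via \eqref{ineq_c} and its symmetric variant $c_G(z,y)\leq\eta_G(x,z)+c_G(x,y)$. Your explicit remark that the second comparison comes from \eqref{ineq_c} under the swap $(x,y,z)\mapsto(y,x,z)$ together with symmetry is a point the paper states without justification, so if anything your write-up is slightly more careful.
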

\begin{proof}
Since $\eta_G$ is a metric and $c_G$ is both symmetric and non-negative, the function $\phi_G$ trivially fulfills the first two conditions of Definition \ref{def_metric}. Note that, by the triangle inequality of the metric $\eta_G$ and the inequality \eqref{ineq_c}, the inequalities
\begin{align*}
\eta_G(x,y)&\leq\eta_G(x,z)+\eta_G(z,y),\\
c_G(x,z)&\leq\eta_G(z,y)+c_G(x,y),\\
c_G(z,y)&\leq\eta_G(x,z)+c_G(x,y),
\end{align*}
hold for all $x,y,z\in G$. Now,
\begin{align*}
\phi_G(x,y)&=\frac{\eta_G(x,y)}{\eta_G(x,y)+c_G(x,y)}
\leq\frac{\eta_G(x,z)+\eta_G(z,y)}{\eta_G(x,z)+\eta_G(z,y)+c_G(x,y)}\\
&=\frac{\eta_G(x,z)}{\eta_G(x,z)+\eta_G(z,y)+c_G(x,y)}+
\frac{\eta_G(z,y)}{\eta_G(x,z)+\eta_G(z,y)+c_G(x,y)}\\
&\leq\frac{\eta_G(x,z)}{\eta_G(x,z)+c_G(x,z)}+
\frac{\eta_G(z,y)}{\eta_G(z,y)+c_G(z,y)}
=\phi_G(x,z)+\phi_G(z,y),
\end{align*}
so the function $\phi_G$ fulfills the triangle inequality and it must be a metric.
\end{proof}

\begin{remark}
(1) If the function $c_G$ of Theorem \ref{thm_cmetric} is strictly positive, the condition $\phi_G(x,x)=0$ does not need to be separately specified. Namely, this condition follows directly from the fact that $\eta_G(x,x)=0$ for a metric $\eta_G$. Note also that if $c_G$ is a null function, the function $\phi_G$ becomes the discrete metric.\newline 
(2) If $\eta_G$ is a metric, then $\eta_G^\alpha$ is a metric, too, for $0<\alpha\leq1$, but this is not true for $\alpha>1$ \cite[Ex. 5.24, p. 80]{hkvbook}.
\end{remark}

\begin{corollary}
The function $\psi:\B^n\times\B^n\to[0,1]$, defined as
\begin{align*}
\psi(x,x)=0,\quad\psi(x,y)=\frac{|x-y|}{|x-y|+c|x||y|}\text{ if }x\neq y,    
\end{align*}
for all $x,y\in\B^n$ with a constant $0<c\leq1$, is a metric on the unit ball.
\end{corollary}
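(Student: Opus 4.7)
The plan is to reduce the claim to Theorem \ref{thm_cmetric} by identifying $\eta_G$ with the Euclidean distance on $\B^n$ and taking the auxiliary function to be $c_G(x,y)=c|x||y|$. Then the definition of $\psi$ in the corollary matches exactly the construction of $\phi_G$ in Theorem \ref{thm_cmetric}, so it suffices to verify that the two hypotheses on $c_G$ are met. Symmetry and non-negativity of $c_G$ are immediate. Observe that $c_G$ is not strictly positive (it vanishes whenever $x=0$ or $y=0$), so the Remark after Theorem \ref{thm_cmetric} does not apply and we genuinely need the separate stipulation $\psi(x,x)=0$, which is precisely what the statement of the corollary provides.

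The only substantive step is verifying the inequality \eqref{ineq_c} in this setting, namely
\begin{align*}
c|x||z|\;\leq\;|z-y|+c|x||y|\qquad\text{for all }x,y,z\in\B^n.
\end{align*}
I would rearrange this as $c|x|\bigl(|z|-|y|\bigr)\leq|z-y|$. If $|z|\leq|y|$ the left-hand side is non-positive and the inequality is immediate. Otherwise, the reverse triangle inequality gives $|z|-|y|\leq|z-y|$, and since $x\in\B^n$ and $0<c\leq 1$ we have $c|x|\leq 1$, so
\begin{align*}
c|x|\bigl(|z|-|y|\bigr)\;\leq\; c|x|\cdot|z-y|\;\leq\;|z-y|,
\end{align*}
which is what we need. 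This handles the only nontrivial hypothesis of Theorem \ref{thm_cmetric}, and the conclusion follows directly.

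I do not expect any genuine obstacle: the entire argument hinges on the elementary bound $|z|-|y|\leq|z-y|$ together with the normalization $c|x|\leq 1$ available on $\B^n$. The only conceptual point worth flagging explicitly is why the piecewise definition is necessary, which I would address in a single sentence noting that $c_G$ fails to be strictly positive at the origin.
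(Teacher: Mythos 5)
Your proposal is correct and follows essentially the same route as the paper: both reduce the claim to Theorem \ref{thm_cmetric} with $c_G(x,y)=c|x||y|$ and verify the inequality \eqref{ineq_c} via the bound $|z|-|y|\leq|z-y|$ together with $c|x|\leq 1$. If anything, your case split on the sign of $|z|-|y|$ is slightly more careful than the paper's one-line chain $c|x|(|z|-|y|)\leq|z|-|y|\leq|z-y|$, whose first inequality as written only holds when $|z|\geq|y|$ (though the conclusion is trivially true in the other case), and your remark on why the separate stipulation $\psi(x,x)=0$ is needed matches the paper's own Remark about $c_G$ failing to be strictly positive.
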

\begin{proof}
Since now
\begin{align*}
c|x|(|z|-|y|)\leq|z|-|y|\leq|z-y|\quad\Rightarrow\quad
c|x||z|\leq|z-y|+c|x||y|,
\end{align*}
for all $x,y,z\in\B^n$, the result follows from Theorem \ref{thm_cmetric}.
\end{proof}

\begin{corollary}\label{cor_upsilon}
If $G$ is a proper subset of a metric space $X$ and $\eta_G$ is some metric defined in the closure of $G$ such that $\eta_G(x)=\inf\{\eta_G(x,u)\text{ }|\text{ }u\in \partial G\}\leq1$ for all $x\in G$, then a function $\upsilon_G:G\times G\to[0,1]$, defined as
\begin{align*}
\upsilon_G(x,y)=\frac{\eta_G(x,y)}{\eta_G(x,y)+c\sqrt{(1+\eta_G(x))(1+\eta_G(y))}}
\end{align*}
with a constant $0<c\leq\sqrt{2}$ is a metric in the domain $G$.
\end{corollary}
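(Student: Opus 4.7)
The plan is to reduce the claim to Theorem \ref{thm_cmetric} by setting
\[
c_G(x,y) := c\sqrt{(1+\eta_G(x))(1+\eta_G(y))},
\]
so that $\upsilon_G = \phi_G$ in the notation of that theorem. Since $c_G$ is manifestly symmetric and strictly positive, the only thing to check is the inequality \eqref{ineq_c}, namely
\[
c\sqrt{(1+\eta_G(x))(1+\eta_G(z))} \;\leq\; \eta_G(z,y) + c\sqrt{(1+\eta_G(x))(1+\eta_G(y))}
\]
for all $x,y,z\in G$.

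I would factor out $c\sqrt{1+\eta_G(x)}$ on the left and move the second term on the right to the left, so the claim becomes
\[
c\sqrt{1+\eta_G(x)}\,\bigl(\sqrt{1+\eta_G(z)}-\sqrt{1+\eta_G(y)}\bigr)\;\leq\;\eta_G(z,y).
\]
Without loss of generality assume $\eta_G(z)\geq \eta_G(y)$ (otherwise the left side is $\leq 0$ and there is nothing to prove). Rationalizing,
\[
\sqrt{1+\eta_G(z)}-\sqrt{1+\eta_G(y)} \;=\; \frac{\eta_G(z)-\eta_G(y)}{\sqrt{1+\eta_G(z)}+\sqrt{1+\eta_G(y)}}.
\]
The standard triangle-inequality consequence $\eta_G(z)\leq \eta_G(z,y)+\eta_G(y)$ gives $\eta_G(z)-\eta_G(y)\leq \eta_G(z,y)$, so it suffices to verify that the scalar
\[
\frac{c\sqrt{1+\eta_G(x)}}{\sqrt{1+\eta_G(z)}+\sqrt{1+\eta_G(y)}}
\]
is at most $1$. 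This is exactly where the hypotheses are used: since $\eta_G(x)\leq 1$ we have $\sqrt{1+\eta_G(x)}\leq\sqrt 2$, while $\eta_G(y),\eta_G(z)\geq 0$ yields $\sqrt{1+\eta_G(z)}+\sqrt{1+\eta_G(y)}\geq 2$; hence the ratio is at most $c/\sqrt 2$, which is $\leq 1$ precisely under the assumption $c\leq\sqrt 2$.

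The only genuine obstacle is the square-root difference, and the two hypotheses of the corollary are tailor-made to control it: $\eta_G(x)\leq 1$ furnishes the numerator bound and $c\leq \sqrt 2$ is sharp for closing the inequality. Once \eqref{ineq_c} is established, Theorem \ref{thm_cmetric} delivers the triangle inequality for $\upsilon_G$; positivity and symmetry are immediate from those of $\eta_G$ and $c_G$, and the range $[0,1]$ is automatic since $c_G>0$.
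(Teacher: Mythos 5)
Your proof is correct and follows essentially the same route as the paper's: both verify the hypothesis \eqref{ineq_c} of Theorem \ref{thm_cmetric} for $c_G(x,y)=c\sqrt{(1+\eta_G(x))(1+\eta_G(y))}$ by rationalizing the square-root difference and bounding the resulting factor via $\sqrt{1+\eta_G(x)}\leq\sqrt{2}$ and $\sqrt{1+\eta_G(z)}+\sqrt{1+\eta_G(y)}\geq 2$. Your explicit reduction to the case $\eta_G(z)\geq\eta_G(y)$ is in fact slightly more careful than the paper, whose chain of inequalities tacitly assumes this sign condition.
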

\begin{proof}
Fix $c_G(x,y)=c\sqrt{(1+\eta_G(x))(1+\eta_G(y))}$. Now,
\begin{align*}
c_G(x,z)-c_G(x,y)&=
c\sqrt{(1+\eta_G(x))(1+\eta_G(z))}-c\sqrt{(1+\eta_G(x))(1+\eta_G(y))}\\
&=c\cdot\frac{(1+\eta_G(x))(1+\eta_G(z))-(1+\eta_G(x))(1+\eta_G(y))}{\sqrt{(1+\eta_G(x))(1+\eta_G(z))}+\sqrt{(1+\eta_G(x))(1+\eta_G(y))}}\\
&=\frac{c\sqrt{1+\eta_G(x)}(\eta_G(z)-\eta_G(y))}{\sqrt{1+\eta_G(z)}+\sqrt{1+\eta_G(y)}}\leq\frac{\sqrt{2}c(\eta_G(z)-\eta_G(y))}{1+1}\\
&\leq\eta_G(z)-\eta_G(y)\leq\eta_G(z,y),
\end{align*}
so the inequality \eqref{ineq_c} holds for all $x,y,z\in G$ and the result follows from Theorem \ref{thm_cmetric}.
\end{proof}

\begin{corollary}
The function $\chi:\B^n\times\B^n\to[0,1]$, defined as
\begin{align*}
\chi(x,y)=\frac{|x-y|}{|x-y|+c\sqrt{(2-|x|)(2-|y|)}}    
\end{align*}
for all $x,y\in\B^n$ with a constant $0<c\leq\sqrt{2}$, is a metric on the unit ball.
\end{corollary}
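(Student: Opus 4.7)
The plan is to recognize this as a direct specialization of Corollary \ref{cor_upsilon} with $G=\B^n$ and $\eta_G$ taken to be the Euclidean metric. First I would verify the hypothesis of that corollary: for $x\in\B^n$, the Euclidean distance to the boundary is $d_{\B^n}(x)=1-|x|$, which lies in $[0,1]$, so the requirement $\eta_G(x)\leq 1$ holds.

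Next I would substitute $\eta_G(x)=1-|x|$ into the formula of Corollary \ref{cor_upsilon}. This gives
\begin{align*}
1+\eta_G(x)=2-|x|,\qquad 1+\eta_G(y)=2-|y|,
\end{align*}
so the resulting function $\upsilon_{\B^n}$ coincides exactly with the function $\chi$ in the statement. Since $0<c\leq\sqrt{2}$ is the range of constants allowed by Corollary \ref{cor_upsilon}, the conclusion follows immediately.

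I do not expect any real obstacle here; the corollary was set up to handle precisely this kind of specialization, and the only thing to check is that $d_{\B^n}(x)\leq 1$, which is immediate. The short proof would simply consist of the substitution identifying $\chi$ with $\upsilon_{\B^n}$ and the invocation of Corollary \ref{cor_upsilon}.
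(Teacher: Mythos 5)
Your proposal is correct and is exactly the paper's own proof: the paper proves this corollary by simply citing Corollary \ref{cor_upsilon}, which with $\eta_{\B^n}(x)=d_{\B^n}(x)=1-|x|\leq 1$ gives $1+\eta_{\B^n}(x)=2-|x|$ and identifies $\chi$ with $\upsilon_{\B^n}$. Your write-up merely makes this substitution explicit, which is a fine (and arguably clearer) way to present the same argument.
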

\begin{proof}
Follows from Corollary \ref{cor_upsilon}.
\end{proof}

Let us focus again on the $t$-metric. Since the result of Theorem \ref{thm_tismetric} holds for any metric $\eta_G$, the $t$-metric is trivially a metric also when defined for the Euclidean metric. Below, we will consider the $t$-metric in this special case only. Let us next prove the inequalities between the $t$-metric and the three hyperbolic type metrics defined earlier.

\begin{theorem}\label{t.jps_bounds}
For all domains $G\subsetneq\R^n$ and all points $x,y\in G$, the following inequalities hold:\\
(1) $j^*_G(x,y)\slash2\leq t_G(x,y)\leq j^*_G(x,y)$,\\
(2) $p_G(x,y)\slash2\leq t_G(x,y)\leq p_G(x,y)$,\\
(3) $s_G(x,y)\slash2\leq t_G(x,y)\leq s_G(x,y)$.\\
Furthermore, in each case the constants are sharp for some domain $G$.
\end{theorem}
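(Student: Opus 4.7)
The plan is to rewrite each of the six inequalities in terms of the three scalars $a=|x-y|$, $u=d_G(x)$, $v=d_G(y)$ and to exploit that $k\mapsto a/(a+k)$ is strictly decreasing in $k\ge 0$, so that any comparison of $t_G$ with another metric of the same shape reduces to a comparison of ``denominator offsets'' above $a$. The only geometric input I need beyond this is the $1$-Lipschitz property of $p\mapsto d_G(p)$, namely the reverse triangle inequality $a\ge|u-v|$.

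\textbf{Upper bounds.} For (1), the estimate $u+v\ge 2\min\{u,v\}$ gives $t_G\le j^*_G$ at once. For (3), choosing $z_0\in\partial G$ with $|x-z_0|=u$ yields $|x-z_0|+|z_0-y|\le 2u+a$, and a symmetric choice yields $\le 2v+a$; hence the infimum defining $s_G$ is at most $a+2\min\{u,v\}\le a+u+v$, proving $t_G\le s_G$. For (2), squaring the claim $\sqrt{a^2+4uv}\le a+u+v$ reduces it to $(u+v)^2+2a(u+v)\ge 4uv$, which is immediate from $(u+v)^2\ge 4uv$.

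\textbf{Lower bounds.} The bound in (1) asks $a+4\min\{u,v\}\ge u+v$; assuming without loss of generality $u\le v$, this reads $a+3u\ge v$ and follows even from the stronger $a+u\ge v$, i.e.\ the $1$-Lipschitz property. The bound in (3) is the cleanest: for any $z\in\partial G$ both $|x-z|+|z-y|\ge a$ (triangle inequality) and $|x-z|+|z-y|\ge u+v$ (from $|x-z|\ge u$ and $|z-y|\ge v$) hold, so adding and taking the infimum gives $2\inf\ge a+u+v$, hence $t_G\ge s_G/2$. The bound in (2), which I expect to be the only step needing real care, asks $2\sqrt{a^2+4uv}\ge a+u+v$; I would split according to whether $a\ge u+v$ or $a<u+v$. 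In the first case $\sqrt{a^2+4uv}\ge a\ge(a+u+v)/2$. In the second case $a\ge|u-v|$ gives $a^2+4uv\ge(u-v)^2+4uv=(u+v)^2$, and so $\sqrt{a^2+4uv}\ge u+v\ge(a+u+v)/2$ instead.

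\textbf{Sharpness.} For all six constants I would use two limiting configurations in $\uhp^n$. The symmetric choice $x=e_n$, $y=e_n+te_1$ with $t\to\infty$ satisfies $u=v=1$, and a direct check gives $t_G=j^*_G$ identically together with $t_G/p_G,\,t_G/s_G\to 1$, showing that the three upper constants cannot be improved. The boundary-approaching choice $x=\varepsilon e_n$, $y=e_n$ with $\varepsilon\to 0^+$ has $u=\varepsilon$, $v=1$, $a=1-\varepsilon$, and a short computation (using reflection across $\partial\uhp^n$ for $s_G$) shows $t_G/j^*_G=t_G/p_G=t_G/s_G=(1+\varepsilon)/2\to 1/2$, yielding sharpness of all three lower bounds with the same witness. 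The main obstacle I foresee is not algebraic but organisational: packaging the three lower bounds uniformly (in particular the case split inside (2)) and presenting the sharpness witnesses cleanly.
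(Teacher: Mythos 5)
Your proposal is correct, and while its overall skeleton — reduce all six inequalities to scalar comparisons in $a=|x-y|$, $u=d_G(x)$, $v=d_G(y)$, with the $1$-Lipschitz property of $d_G$ as the only geometric input — matches the paper, several steps take a genuinely different route. The paper obtains the upper bounds in (2) and (3) by citing external lemmas ($j^*_G\leq p_G$ and $j^*_G\leq s_G$ from \cite{hvz}) combined with part (1), whereas you prove them directly: squaring for $p_G$, and bounding the Heron infimum above by $a+2\min\{u,v\}$ via a near-minimizer $z_0\in\partial G$ for $s_G$; this makes your proof self-contained. The most substantive divergence is the lower bound in (2): the paper reduces $a+u+v\leq 2\sqrt{a^2+4uv}$ to showing that the quadratic $f(k)=k^2+4uv+2uk-3u^2-12v^2-12kv$, with $k=|u-v|$ constrained to $[0,u]$ by the Lipschitz property, is nonpositive, checking $f(0)$, $f(u)$ and the critical point $k=6v-u$; your two-case split (if $a\geq u+v$ then $\sqrt{a^2+4uv}\geq a\geq(a+u+v)/2$; if $a<u+v$ then $a\geq|u-v|$ gives $\sqrt{a^2+4uv}\geq u+v>(a+u+v)/2$) proves the same inequality with no calculus and makes transparent exactly where the Lipschitz bound enters. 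Your sharpness witnesses in $\uhp^n$ are correct and equivalent to the paper's examples $x=ki$, $y=i$ and $x=ki$, $y=1+ki$ in $\uhp^2$ (your computations $t_G/j^*_G=t_G/p_G=t_G/s_G=(1+\varepsilon)/2\to 1/2$ and the identity $t_G=j^*_G$ when $u=v$ check out, the latter being precisely the paper's own remark), with the small organisational bonus that your two configurations handle all six constants uniformly.
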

\begin{proof} (1) $t_G(x,y)\leq j^*_G(x,y)$ follows trivially from the definitions of these metrics. If $d_G(x)\leq d_G(y)$, then $d_G(y)\leq|x-y|+d_G(x)$ and $d_G(x)+d_G(y)\leq|x-y|+2d_G(x)$. In the same way, if $d_G(y)\leq d_G(x)$, then $d_G(x)+d_G(y)\leq|x-y|+2d_G(y)$. It follows from this that
\begin{align*}
d_G(x)+d_G(y)\leq|x-y|+2\min\{d_G(x),d_G(y)\}.    
\end{align*}
With this information, we can write
\begin{align*}
j^*_G(x,y)&=\frac{|x-y|}{|x-y|+2\min\{d_G(x),d_G(y)\}}\\
&=\frac{2|x-y|}{|x-y|+2\min\{d_G(x),d_G(y)\}+|x-y|+2\min\{d_G(x),d_G(y)\}}\\
&\leq\frac{2|x-y|}{|x-y|+2\min\{d_G(x),d_G(y)\}+d_G(x)+d_G(y)}\\
&\leq\frac{2|x-y|}{|x-y|+d_G(x)+d_G(y)}=2t_G(x,y).
\end{align*}
The equality $t_G(x,y)=j^*_G(x,y)$ holds always when $d_G(x)=d_G(y)$. For $x=i$ and $y=ki$, $\lim_{k\to0^+}(t_{\uhp^2}(x,y)\slash j^*_{\uhp^2}(x,y))=1\slash2$.
Thus, the first inequality of the theorem and its sharpness follow.\\
\\
(2) From Lemma \ref{jp_bounds} and Theorem \ref{t.jps_bounds}(1), it follows that $t_G(x,y)\leq p_G(x,y)$.

Let us now prove that $p_G(x,y)\slash2\leq t_G(x,y)$. This is clearly equivalent to
\begin{align}\label{ineq_tp}
|x-y|+d_G(x)+d_G(y)\leq2\sqrt{|x-y|^2+4d_G(x)d_G(y)}.    
\end{align}
Fix $u=|x-y|$, $v=\min\{d_G(x),d_G(y)\}$ and $k=|d_G(x)-d_G(y)|$. The inequality \eqref{ineq_tp} is now
\begin{align*}
u+2v+k\leq2\sqrt{u^2+4v(v+k)}\quad\Leftrightarrow\quad
k^2+4uv+2uk-3u^2-12v^2-12kv\leq0.
\end{align*}
Define a function $f(k)=k^2+4uv+2uk-3u^2-12v^2-12kv$. Since the inequality above is equivalent to $f(k)\leq0$, we need to find out the greatest value of this function. There is no upper limit for $u\geq0$ or $v\geq0$ but $0\leq k\leq u$. We can solve that
\begin{align*}
f'(k)=2k+2u-12v=0\quad\Leftrightarrow\quad k=6v-u.
\end{align*}
Since
\begin{align*}
f(0)&=-3u^2+4uv-12v^2\leq-2u^2-8v^2\leq0,\\
f(6v-u)&=-4u^2+16uv-42v^2\leq-26v^2\leq0,\\
f(u)&=-8uv-12v^2\leq0,
\end{align*}
$f(k)$ is always non-positive on the closed interval $k\in[0,u]$ and, consequently, the inequality $p_G(x,y)\slash2\leq t_G(x,y)$ follows.

For $x=ki$ and $y=i$, $\lim_{k\to0^+}(t_{\uhp^2}(x,y)\slash p_{\uhp^2}(x,y))=1\slash2$ and, for $x=ki$ and $y=1+ki$, $\lim_{k\to0^+}(t_{\uhp^2}(x,y)\slash p_{\uhp^2}(x,y))=1$.\\ 
\\ 
(3) By the triangle inequality and Lemma \ref{sj_properbounds} and Theorem \ref{t.jps_bounds}(1),
\begin{align*}
\frac{s_G(x,y)}{2}&=\frac{|x-y|}{\inf_{z\in\partial G}(|x-z|+|z-y|)+\inf_{z\in\partial G}(|x-z|+|z-y|)}\\
&\leq\frac{|x-y|}{\inf_{z\in\partial G}(|x-z|+|z-y|)+d_G(x)+d_G(y)}
\leq t_G(x,y)\\
&\leq j^*_G(x,y)
\leq s_G(x,y).    
\end{align*}
For $x=ki$ and $y=i$, $\lim_{k\to0^+}(t_{\uhp^2}(x,y)\slash s_{\uhp^2}(x,y))=1\slash2$ and, for $x=ki$ and $y=1+ki$, $\lim_{k\to0^+}(t_{\uhp^2}(x,y)\slash s_{\uhp^2}(x,y))=1$.
\end{proof}

\begin{proposition}
For any fixed domain $G\subsetneq\R^n$, the inequalities of Theorem \ref{t.jps_bounds} are sharp.
\end{proposition}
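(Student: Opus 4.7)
The plan is to exhibit, in any fixed domain $G\subsetneq\R^n$, two explicit families of point pairs that realise the six extreme constants as limits. For the three upper bounds (constant $1$) I would take $y\to x$; for the three lower bounds (constant $1/2$) I would keep one point fixed and let the other slide along the shortest segment to a nearest boundary point.

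\textbf{Sharpness of the upper bounds.} Fix any $x\in G$ and let $y\to x$. Because $d_G$ is $1$-Lipschitz, $d_G(y)\to d_G(x)$, so a direct calculation gives $t_G(x,y)/|x-y|$, $j^*_G(x,y)/|x-y|$ and $p_G(x,y)/|x-y|$ all tending to $1/(2d_G(x))$. For the $s_G$-ratio I would use the two-sided estimate
\[
2d_G(x)-|x-y|\;\leq\;\inf_{z\in\partial G}(|x-z|+|z-y|)\;\leq\;2d_G(x)+|x-y|,
\]
whose left side follows from $|x-z|+|z-y|\geq 2|x-z|-|x-y|\geq 2d_G(x)-|x-y|$ and whose right side is obtained by testing $z=w$, a nearest boundary point of $x$. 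Hence $s_G(x,y)/|x-y|\to 1/(2d_G(x))$ as well, and all three ratios $t_G/j^*_G$, $t_G/p_G$, $t_G/s_G$ tend to $1$.

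\textbf{Sharpness of the lower bounds.} Fix $x\in G$ and choose, by compactness of $\partial G\cap\overline{B}^n(x,2d_G(x))$, a point $w\in\partial G$ with $|x-w|=d_G(x)=:r$. I would then use the test family
\[
y_\tau=(1-\tau)x+\tau w,\qquad \tau\in(0,1).
\]
Minimality of $|x-w|$ forces the open segment into $G$, so $y_\tau\in G$, and the $1$-Lipschitz property of $d_G$ combined with $|y_\tau-w|=(1-\tau)r$ pins down $d_G(y_\tau)=(1-\tau)r$ exactly, with $w$ again a nearest boundary point of $y_\tau$. Direct substitution then yields
\[
t_G(x,y_\tau)=\frac{\tau}{2},\qquad j^*_G(x,y_\tau)=p_G(x,y_\tau)=s_G(x,y_\tau)=\frac{\tau}{2-\tau},
\]
where for $s_G$ one uses that $w$ simultaneously realises $d_G(x)$ and $d_G(y_\tau)$, so $\inf_{z\in\partial G}(|x-z|+|z-y_\tau|)=(2-\tau)r$. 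Letting $\tau\to 1^-$ sends each of the three ratios to $1/2$.

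The only step requiring any care is the identification $d_G(y_\tau)=(1-\tau)r$ and the claim that $w$ realises the infimum defining $s_G(x,y_\tau)$; both reduce to the $1$-Lipschitz property of $d_G$ together with the minimality of $|x-w|$. Everything else is routine algebra, so I do not expect a genuine obstacle.
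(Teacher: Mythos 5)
Your proposal is correct and takes essentially the same route as the paper: the paper likewise picks the inscribed ball $B^n(x,r)\subset G$ with a nearest boundary point $w\in S^{n-1}(x,r)\cap\partial G$ and slides $y$ along the segment $[x,w]$, computing $t_G(x,y)=\tfrac{1-k}{2}$ and $j^*_G(x,y)=p_G(x,y)=s_G(x,y)=\tfrac{1-k}{1+k}$ (your $\tau=1-k$), so that $k\to0^+$ and $k\to1^-$ yield the constants $\tfrac12$ and $1$. Your separate $y\to x$ argument for the constant-$1$ side, with the two-sided estimate on $\inf_{z\in\partial G}(|x-z|+|z-y|)$, is just a slightly more general version of the paper's $k\to1^-$ limit along that same segment, and your verification that $d_G(y_\tau)=(1-\tau)r$ via the $1$-Lipschitz property makes explicit a point the paper leaves implicit.
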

\begin{proof}
If $G$ is a proper subdomain, there must exist some ball $B^n(x,r)\subset G$ with $S^{n-1}(x,r)\cap\partial G\neq\varnothing$ where $r>0$. Fix $z\in S^{n-1}(x,r)\cap\partial G$ and $y\in[x,z]$ so that $|y-z|=kr$ with $k\in(0,1)$. Clearly, $d_G(x)=r$, $d_G(y)=kr$, $|x-y|=(1-k)r$ and $\inf_{z\in\partial G}(|x-z|+|z-y|)=1+k$. Consequently,
\begin{align*}
t_G(x,y)&=\frac{(1-k)r}{(1-k)r+r+kr}=\frac{1-k}{2},\\
j^*_G(x,y)&=p_G(x,y)=s_G(x,y)=\frac{1-k}{1+k}.
\end{align*}
It follows that
\begin{align*}
\lim_{k\to0^+}\frac{t_G(x,y)}{j^*_G(x,y)}&=
\lim_{k\to0^+}\frac{t_G(x,y)}{j^*_G(x,y)}=
\lim_{k\to0^+}\frac{t_G(x,y)}{j^*_G(x,y)}=
\lim_{k\to0^+}\left(\frac{1+k}{2}\right)=
\frac{1}{2},\\
\lim_{k\to1^-}\frac{t_G(x,y)}{j^*_G(x,y)}&=
\lim_{k\to1^-}\frac{t_G(x,y)}{j^*_G(x,y)}=
\lim_{k\to1^-}\frac{t_G(x,y)}{j^*_G(x,y)}=
\lim_{k\to1^-}\left(\frac{1+k}{2}\right)=1.
\end{align*}
Thus, regardless of how $G$ is chosen, the inequalities of Theorem \ref{t.jps_bounds} are sharp.
\end{proof}

Next, we will study the connection between the $t$-metric and the hyperbolic metric.

\begin{theorem}\label{rhot}
For all $x,y\in G\in\{\uhp^n,\B^n\}$, the inequality 
\begin{align*}
\frac{1}{2}{\rm th}\frac{\rho_G(x,y)}{2}\leq t_G(x,y)\leq{\rm th}\frac{\rho_G(x,y)}{2}
\end{align*}
holds and the constants here are sharp.
\end{theorem}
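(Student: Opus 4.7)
The plan is to split into the cases $G=\uhp^n$ and $G=\B^n$, applying Theorem \ref{t.jps_bounds}(2) together with Lemma \ref{rhojsp_inG} where possible and filling the one remaining gap by a direct calculation in the ball. The half-space case is immediate: Lemma \ref{rhojsp_inG}(1) identifies $p_{\uhp^n}(x,y)=\text{th}(\rho_{\uhp^n}(x,y)/2)$, and Theorem \ref{t.jps_bounds}(2) then delivers both inequalities at once. For $\B^n$, the upper bound follows in the same manner, chaining $t_{\B^n}(x,y)\leq p_{\B^n}(x,y)\leq\text{th}(\rho_{\B^n}(x,y)/2)$ via Theorem \ref{t.jps_bounds}(2) and Lemma \ref{rhojsp_inG}(2).

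The lower bound in $\B^n$ is the real obstacle, because the inequality $p_{\B^n}\leq\text{th}(\rho_{\B^n}/2)$ runs the wrong way here and blocks the easy chain. I would work directly with the explicit formulas $d_{\B^n}(x)=1-|x|$ and $\text{th}(\rho_{\B^n}(x,y)/2)=|x-y|/A[x,y]$, where $A[x,y]=\sqrt{|x-y|^2+(1-|x|^2)(1-|y|^2)}$; the claim then reduces to the purely algebraic statement
\[
2A[x,y]\geq |x-y|+(1-|x|)+(1-|y|).
\]
Setting $a=|x|$, $b=|y|$, $\gamma=|x-y|\in[|a-b|,a+b]$ and squaring, this becomes the assertion that a certain convex quadratic $g(\gamma)$ is non-negative on the feasible interval. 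I would check the endpoints, where $g$ factors pleasantly as $4ab(2+ab)$ at $\gamma=a+b$ and, assuming $a\leq b$, as $4a(1-b)(2-a(1+b))$ at $\gamma=|a-b|$, both manifestly non-negative. Then, for the case when the vertex $\gamma^{*}=(2-a-b)/3$ lies in the interior of $[|a-b|,a+b]$, I would translate the remaining inequality $g(\gamma^{*})\geq 0$ into one in the variables $s=a+b$ and $q=ab$, and verify it using the feasibility constraints $s\in[1/2,3/2]$ and $q\geq(2s-1)(s+1)/9$ that place the vertex inside.

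For sharpness, the one-parameter family already used in Theorem \ref{t.jps_bounds} and the proposition following it can be reused: take $x,y$ on a common Euclidean segment toward a boundary point of $G$. Letting one of them approach the boundary while the other stays bounded away drives the ratio $t_{G}(x,y)/\text{th}(\rho_{G}(x,y)/2)\to 1/2$, and letting both approach the same boundary point with $|x-y|$ comparable to $d_{G}$ drives the ratio to $1$. The main difficulty throughout is the interior-vertex algebra in $\B^{n}$; once that estimate is in place, everything else is bookkeeping with Theorem \ref{t.jps_bounds}(2) and Lemma \ref{rhojsp_inG}.
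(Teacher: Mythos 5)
Your skeleton for the easy parts coincides with the paper's: in $\uhp^n$ the paper also reads off both bounds from the exact identity in Lemma \ref{rhojsp_inG}(1) (it cites part (3) of Theorem \ref{t.jps_bounds} with $s_{\uhp^n}$ rather than part (2) with $p_{\uhp^n}$, but these are the same quantity in the half-space), and in $\B^n$ it gets the upper bound by the same chaining (via $j^*_{\B^n}$ instead of $p_{\B^n}$). For the real content, the lower bound in $\B^n$, you take a genuinely different route, and in fact a more robust one. The paper reduces to $n=2$ and invokes the bound $A[x,y]\geq|x-y|+(1-|x|)(1-|y|)$ from \cite{avv}, ending its chain with the step $\frac{|x-y|-|x|-|y|+1}{|x-y|-|x|-|y|+2}\geq\frac{1}{2}$; that step is equivalent to $|x-y|\geq|x|+|y|$, which fails for typical pairs (for $x=0.9e_1$, $y=0.8e_1$ the intermediate quantity in the paper's chain equals $0.3<\frac12$), so the AVV bound is too lossy to close the argument as printed, even though the theorem itself is true. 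Your direct analysis establishes exactly the missing estimate: with $a=|x|$, $b=|y|$, $\gamma=|x-y|$, the quadratic $g(\gamma)=4A[x,y]^2-(\gamma+(1-a)+(1-b))^2=3\gamma^2-2(2-a-b)\gamma+4(1-a^2)(1-b^2)-(2-a-b)^2$ has vertex $\gamma^*=(2-a-b)/3$ as you say, your endpoint factorizations $g(a+b)=4ab(2+ab)$ and $g(|a-b|)=4a(1-b)(2-a(1+b))$ for $a\leq b$ are correct, and in the interior-vertex case the condition $g(\gamma^*)\geq0$ reduces to $3q^2+6q+4s-4s^2-1\geq0$ with $s=a+b$, $q=ab$; substituting your constraint $q\geq(2s-1)(s+1)/9$ gives $3q^2+6q+4s-4s^2-1\geq(s-2)^2(2s-1)(2s+11)/27\geq0$, so the argument closes. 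A side benefit of your route: since $\mathrm{th}(\rho_{\B^n}(x,y)/2)=|x-y|/A[x,y]$ in every dimension, your computation is dimension-free and does not need the paper's reduction to $n=2$.

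Two slips to repair, neither fatal. First, your feasibility range $s\in[1/2,3/2]$ for the interior-vertex case is wrong: interiority of $\gamma^*$ only forces $s\in(1/2,2)$, and the range $(3/2,2)$ does occur, e.g. $a=b=0.9$ gives $s=1.8$ with $\gamma^*=1/15$ strictly inside $(0,1.8)$. As written your plan would leave $s\in(3/2,2)$ unverified; fortunately the factorization above is nonnegative on all of $[1/2,2)$, so only the bookkeeping needs correcting. Second, your sharpness recipe for the constant $1$ does not work as stated: if both points approach the same boundary point with $|x-y|$ genuinely comparable to $d_G$, the ratio stays away from $1$; in $\uhp^2$ with $x=ki$, $y=k+ki$ one gets $t_G(x,y)/\mathrm{th}(\rho_G(x,y)/2)=\sqrt{5}/3$ for every $k$. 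What is needed is $|x-y|/d_G(x)\to\infty$, or simply any family with $t_G(x,y)\to1$, since $t_G\leq\mathrm{th}(\rho_G/2)\leq1$ then forces the ratio to $1$ (the paper uses $x=ke_1$, $y=-ke_1$, $k\to1^-$ in $\B^n$). Your recipe for the constant $1/2$ is fine; in the ball the equality $t_{\B^n}(0,y)=\frac12\,\mathrm{th}(\rho_{\B^n}(0,y)/2)$ even holds exactly whenever one point is the origin, consistent with your endpoint factor $4a(1-b)(2-a(1+b))$ vanishing at $a=0$.
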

\begin{proof}
In the case $G=\uhp^n$, the result follows directly from Lemma \ref{rhojsp_inG}(1) and Theorem \ref{t.jps_bounds}(3).

Suppose now that $G=\B^n$. From Lemma \ref{rhojsp_inG}(2) and Theorem \ref{t.jps_bounds}(1), it follows that 
\begin{align*}
t_{\B^n}(x,y)\leq j^*_{\B^n}(x,y)\leq
\text{th}\frac{\rho_{\B^n}(x,y)}{2}. 
\end{align*}
Thus, the inequality $t_{\B^n}(x,y)\leq\text{th}(\rho_{\B^n}(x,y)\slash2)$ holds for all $x,y\in\B^n$ and the sharpness follows because $x=ke_1$ and $y=-ke_1$ fulfill $\lim_{k\to1^-}(t_{\B^n}(x,y)\slash\text{th}(\rho_{\B^n}(x,y)\slash2))=1$.

Since the values of the $t$-metrics and the hyperbolic metric in the domain $\B^n$ only depend on how the points $x,y$ are located on the two-dimensional plane fixed by these two points and the origin, we can assume without loss of generality that $n=2$. Consider now the quotient
\begin{align*}
\frac{t_{\B^2}(x,y)}{\text{th}(\rho_{\B^2}(x,y)\slash2)}=
\frac{A[x,y]}{|x-y|+1-|x|+1-|y|}.
\end{align*}
By \cite[Lemma 7.57.(1) p. 152]{avv}, $A[x,y]\geq|x-y|+(1-|x|)(1-|y|)$ for all $x,y\in\B^2$. It follows that
\begin{align*}
\frac{t_{\B^2}(x,y)}{\text{th}(\rho_{\B^2}(x,y)\slash2)}\geq
\frac{|x-y|+1-|x|-|y|+|x||y|}{|x-y|+1-|x|+1-|y|}\geq
\frac{|x-y|-|x|-|y|+1}{|x-y|-|x|-|y|+2}\geq\frac{1}{2}, 
\end{align*}
which proves that $\text{th}(\rho_{\B^2}(x,y)\slash2)\slash2\leq t_{\B^2}(x,y)$. By the observation above, this also holds in the more general case where $n$ is not fixed. The inequality is sharp, too: For $x=ke_1$ and $y=-ke_1$, $\lim_{k\to0^+}(t_{\B^n}(x,y)\slash\text{th}(\rho_{\B^n}(x,y)\slash2))=1\slash2$.
\end{proof}

\begin{theorem}
For a fixed angle $\theta\in(0,2\pi)$ and for all $x,y\in S_\theta$, the following results hold:\newline 
(1) $t_{S_\theta}(x,y)\leq
{\rm th}(\rho_{S_\theta}(x,y)\slash2)\leq
2(\pi\slash\theta)\sin(\theta\slash2) t_{S_\theta}(x,y)$ if $\theta\in(0,\pi)$,\newline
(2) $t_{S_\theta}(x,y)\leq{\rm th}(\rho_{S_\theta}(x,y)\slash2)\leq2t_{S_\theta}(x,y)$ if $\theta=\pi$,\newline
(3) $(\pi\slash\theta)t_{S_\theta}(x,y)\leq
{\rm th}(\rho_{S_\theta}(x,y)\slash2)\leq
 2t_{S_\theta}(x,y)$ if $\theta\in(\pi,2\pi)$.
\end{theorem}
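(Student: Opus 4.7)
The plan is to reduce the theorem to a sharp comparison between the triangular ratio metric $s_{S_\theta}$ and the hyperbolic metric on the sector, and then apply Theorem~\ref{t.jps_bounds}(3), which already sandwiches $t_G$ between $s_G/2$ and $s_G$. Concretely, I would first establish (or cite from \cite{hkvbook}) an inequality of the shape
\[
\alpha(\theta)\, s_{S_\theta}(x,y)\;\leq\;{\rm th}\frac{\rho_{S_\theta}(x,y)}{2}\;\leq\;\beta(\theta)\, s_{S_\theta}(x,y),
\]
with $\alpha(\theta)=1$, $\beta(\theta)=(\pi/\theta)\sin(\theta/2)$ when $\theta\in(0,\pi]$, and $\alpha(\theta)=\pi/\theta$, $\beta(\theta)=1$ when $\theta\in[\pi,2\pi)$. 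The natural tool for this is the conformal map $\phi(z)=z^{\pi/\theta}:S_\theta\to\uhp^2$, which preserves $\rho$, combined with the identity ${\rm th}(\rho_{\uhp^n}(u,v)/2)=s_{\uhp^n}(u,v)$ from Lemma~\ref{rhojsp_inG}(1); one then analyses how $|x-y|$, $d_{S_\theta}(x)$, and $\inf_{z\in\partial S_\theta}(|x-z|+|z-y|)$ distort under $\phi$.

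Once that comparison is in hand, each of the three cases is just a two-line chase. For $\theta\in(0,\pi)$ (case (1)), the upper bound reads
\[
{\rm th}\frac{\rho_{S_\theta}(x,y)}{2}\leq(\pi/\theta)\sin(\theta/2)\,s_{S_\theta}(x,y)\leq 2(\pi/\theta)\sin(\theta/2)\,t_{S_\theta}(x,y),
\]
using $s_{S_\theta}\leq 2t_{S_\theta}$, while the lower bound is $t_{S_\theta}(x,y)\leq s_{S_\theta}(x,y)\leq{\rm th}(\rho_{S_\theta}(x,y)/2)$. For $\theta\in(\pi,2\pi)$ (case (3)), the upper bound is ${\rm th}(\rho_{S_\theta}/2)\leq s_{S_\theta}\leq 2t_{S_\theta}$, and the lower bound is $(\pi/\theta)\,t_{S_\theta}\leq(\pi/\theta)\,s_{S_\theta}\leq{\rm th}(\rho_{S_\theta}/2)$. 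Case~(2) is just $S_\pi=\uhp^2$, so it is immediate from Theorem~\ref{rhot}; note that it is also the consistent $\theta\to\pi$ limit of (1) and (3), which is a useful sanity check.

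The real obstacle is the sharp comparison of $s_{S_\theta}$ with ${\rm th}(\rho_{S_\theta}/2)$, since the distortion of $|x-y|$ and of distances to the two bounding rays under $z\mapsto z^{\pi/\theta}$ is not uniform. I expect the extremal configurations to fall into two families: points close to the vertex of the sector, where the power map stretches/contracts all distances by a factor $\pi/\theta$ (yielding the $\pi/\theta$ factor in the bounds), and pairs of points that sit far from the vertex along the bisector, where the inscribed-angle geometry of $S_\theta$ produces the $\sin(\theta/2)$ factor via the shortest path through $\partial S_\theta$. Verifying that these two families are indeed extremal, and that no other configuration yields a worse ratio, is where the bulk of the work lies; the final packaging into the stated inequalities for $t_{S_\theta}$ is then essentially automatic from Theorem~\ref{t.jps_bounds}(3).
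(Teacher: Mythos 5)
Your proposal is correct and takes essentially the same route as the paper: the paper's proof is exactly the two-line chase you describe, combining Theorem \ref{t.jps_bounds}(3) (and Theorem \ref{rhot} for $\theta=\pi$) with the sharp sector comparison between $s_{S_\theta}$ and ${\rm th}(\rho_{S_\theta}/2)$, whose constants you have guessed exactly right. The only difference is bibliographic: what you flag as ``the real obstacle'' (proving the $s$-versus-$\rho$ comparison via $z\mapsto z^{\pi/\theta}$ and the extremal configurations) is not carried out in the paper either, but cited from \cite[Cor.\ 4.9, p.\ 9]{sqm} rather than \cite{hkvbook}.
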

\begin{proof}
Follows from Theorems \ref{t.jps_bounds}(3) and \ref{rhot}, and \cite[Cor. 4.9, p. 9]{sqm}.
\end{proof}

\section{Quasiconformal Mappings and Lipschitz Constants}\label{sect_qmaps}

In this section, we will study the behaviour of the $t$-metric under different conformal and quasiconformal mappings in order to demonstrate how this metric works.

\begin{remark}
The $t$-metric is invariant under all similarity maps. In particular, the $t$-metric defined in a sector $S_\theta$ is invariant under a reflection over the bisector of the sector and a stretching $x\mapsto r\cdot x$ with any $r>0$. Consequently, this allows us to make certain assumptions when choosing the points $x,y\in S_\theta$. 
\end{remark}

First, let us study how the $t$-metric behaves under a certain conformal mapping between two sectors with angles at most $\pi$.

\begin{lemma}
If $\alpha,\beta\in(0,\pi]$ and $f:S_\alpha\to S_\beta$, $f(z)=z^{(\beta\slash \alpha)}$, then for all $x,y\in S_\alpha$
\begin{align*}
&\frac{t_{S_\alpha}(x,y)}{2}\leq t_{S_\beta}(f(x),f(y))\leq\frac{\beta\sin(\alpha\slash2)}{\alpha\sin(\beta\slash2)}t_{S_\alpha}(x,y)\text{ if }\alpha\leq\beta,\\
&\frac{\beta\sin(\alpha\slash2)}{\alpha\sin(\beta\slash2)}t_{S_\alpha}(x,y)\leq t_{S_\beta}(f(x),f(y))\leq 2t_{S_\alpha}(x,y)\text{ otherwise.}
\end{align*}
\end{lemma}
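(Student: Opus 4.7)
My first step is to note a symmetry that halves the workload: applying the ``$\alpha\le\beta$'' case with the roles of $\alpha$ and $\beta$ swapped, to the inverse map $f^{-1}\colon S_\beta\to S_\alpha$, $w\mapsto w^{\alpha/\beta}$, and substituting $w=f(x)$, $v=f(y)$ produces the ``otherwise'' inequality after rearrangement. So it suffices to prove the case $\alpha\le\beta$; set $p:=\beta/\alpha\ge 1$ and $K:=\beta\sin(\alpha/2)/(\alpha\sin(\beta/2))$, noting $K\ge 1$ because $\sin(t)/t$ is decreasing on $(0,\pi/2]$.

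My approach is to route the estimates through the triangular ratio metric $s_G$, using Theorem~\ref{t.jps_bounds}(3) ($s_G/2\le t_G\le s_G$). In a sector $S_\theta$ of opening $\theta\le \pi$ the $s$-metric admits a closed reflection formula: $s_{S_\theta}(x,y)=|x-y|/|x-y^{*}|$, where $y^{*}$ is the reflection of $y$ across whichever bounding ray produces the smaller denominator. Under $f(z)=z^p$, arguments multiply by $p$ and moduli are raised to the $p$-th power, so the comparison of $s_{S_\beta}(f(x),f(y))$ with $s_{S_\alpha}(x,y)$ becomes an elementary trigonometric ratio controlled by the inequalities $\sin\psi\le \sin(p\psi)\le p\sin\psi$ (valid for $p\ge 1$ and $p\psi\le\pi/2$). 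The aim is to extract
\[
 s_{S_\alpha}(x,y)\le s_{S_\beta}(f(x),f(y))\le K\,s_{S_\alpha}(x,y),
\]
with the right-hand constant sharp as $x,y$ collapse on the bisector of $S_\alpha$. Granting this, the lower bound $t_{S_\alpha}/2\le t_{S_\beta}(f(\cdot))$ follows immediately from Theorem~\ref{t.jps_bounds}(3):
\[
 t_{S_\beta}(f(x),f(y))\ge \tfrac12 s_{S_\beta}(f(x),f(y))\ge \tfrac12 s_{S_\alpha}(x,y)\ge \tfrac12 t_{S_\alpha}(x,y).
\]

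\textbf{Main obstacle.} The upper bound is the delicate step. A naive chain gives $t_{S_\beta}(f(\cdot))\le s_{S_\beta}(f(\cdot))\le K s_{S_\alpha}\le 2Kt_{S_\alpha}$, losing a factor of $2$. To recover the correct constant $K$, I would exploit the fact that the extremal configuration for the $s$-upper bound (two points coalescing on the bisector of $S_\alpha$) is precisely the regime where $t/s\to 1$ on both sides, so the two factor-$2$ gaps in Theorem~\ref{t.jps_bounds}(3) cannot both be attained simultaneously. Making this precise—either by splitting into ``close'' and ``far'' configurations, or by working with $t_{S_\beta}$ directly via the two sharp building blocks $|f(x)-f(y)|\le p\max(|x|,|y|)^{p-1}|x-y|$ (from the mean value theorem on the convex domain $S_\alpha$) and $d_{S_\beta}(f(z))\ge (\sin(\beta/2)/\sin(\alpha/2))|z|^{p-1}d_{S_\alpha}(z)$ (which follows from $d_{S_\theta}(z)=|z|\sin\psi_z$ with $\psi_z\in(0,\theta/2]$ and the monotonicity of $\sin(p\psi)/\sin\psi$ on this interval), whose constants $p$ and $\sin(\beta/2)/\sin(\alpha/2)$ combine exactly to $K$—is the technical crux and where I expect most of the effort to go.
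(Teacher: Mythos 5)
Your symmetry reduction is sound (applying the $\alpha\leq\beta$ case to $f^{-1}\colon S_\beta\to S_\alpha$, $w\mapsto w^{\alpha/\beta}$, and rearranging does give the ``otherwise'' case), and your lower-bound chain $t_{S_\beta}(f(x),f(y))\geq\tfrac12 s_{S_\beta}(f(x),f(y))\geq\tfrac12 s_{S_\alpha}(x,y)\geq\tfrac12 t_{S_\alpha}(x,y)$ is exactly the paper's inequality \eqref{tfalphabeta_ine}, via Theorem \ref{t.jps_bounds}(3) and the monotonicity of $s$ under power maps cited from \cite{sqm}. The genuine gap is the upper bound with the sharp constant $K=\beta\sin(\alpha/2)/(\alpha\sin(\beta/2))$, which you correctly identify as the crux but do not prove. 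Option (a), that the two factor-$2$ losses in $t\asymp s$ ``cannot both be attained,'' is only a heuristic. Option (b) fails concretely. Writing $t=D/(D+c)$, to convert your two building blocks into $t_{S_\beta}(f(x),f(y))\leq(M/m)\,t_{S_\alpha}(x,y)$ you need \emph{simultaneously} $|f(x)-f(y)|\leq M|x-y|$ and $d_{S_\beta}(f(x))+d_{S_\beta}(f(y))\geq m\bigl(d_{S_\alpha}(x)+d_{S_\alpha}(y)\bigr)$ with $M/m=K$. Your first block forces $M=p\max(|x|,|y|)^{p-1}$ with $p=\beta/\alpha$, hence $m=\frac{\sin(\beta/2)}{\sin(\alpha/2)}\max(|x|,|y|)^{p-1}$; but your second block only gives $d_{S_\beta}(f(z))\geq\frac{\sin(\beta/2)}{\sin(\alpha/2)}|z|^{p-1}d_{S_\alpha}(z)$ with the modulus of the \emph{individual} point, and the homogeneities do not match when $|x|\neq|y|$. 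Take $\alpha=\pi/2$, $\beta=\pi$, $x=e^{i\pi/4}$, $y=\varepsilon e^{i\pi/4}$: the required distance-sum inequality becomes $1+\varepsilon^2\geq\frac{\sin(\pi/2)}{\sin(\pi/4)}\bigl(\sin(\pi/4)+\varepsilon\sin(\pi/4)\bigr)=1+\varepsilon$, false for every $\varepsilon\in(0,1)$. (The lemma's conclusion still holds there --- the ratio tends to $(2+\sqrt2)/4<\sqrt2=K$ --- but the slack sits in the block $|f(x)-f(y)|\leq M|x-y|$, and your two-bound scheme has no way to transfer it.)

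What is missing is precisely a reduction to the equal-moduli case: when $|x|=|y|$ your two blocks do combine to give exactly $K$, since then $M=p|x|^{p-1}$ and $m=\frac{\sin(\beta/2)}{\sin(\alpha/2)}|x|^{p-1}$. The paper's proof supplies exactly this reduction: it writes the quotient $t_{S_\beta}(f(x),f(y))/t_{S_\alpha}(x,y)$ explicitly in polar coordinates (its formula \eqref{quo_tbetaf} with $x=e^{hi}$, $y=re^{ki}$, $r\geq1$), argues that it is decreasing in $r$ so that the maximum occurs at $r=1$, and then maximizes the resulting angular expression, the extremum being attained as both points collapse onto the bisector, yielding the limit $K$ in \eqref{tf_lim}. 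To complete your argument you would have to establish this $r$-monotonicity (or some substitute reduction to $|x|=|y|$), and that is the actual technical content of the paper's proof; your plan stops short of it.
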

\begin{proof}
If $\alpha\leq\beta$, by Theorem \ref{t.jps_bounds}(3) and \cite[Lemma 5.11, p. 13]{sqm},
\begin{align}\label{tfalphabeta_ine}
t_{S_\beta}(f(x),f(y))\geq \frac{s_{S_\beta}(f(x),f(y))}{2}\geq \frac{s_{S_\alpha}(x,y)}{2}\geq\frac{t_{S_\alpha}(x,y)}{2}.  
\end{align}

Suppose that $\alpha\leq\beta$ still. Fix $x=e^{hi}$ and $y=re^{ki}$, where $0<h\leq k<\alpha$ and $r\geq1$ without loss of generality. Consider the quotient
\begin{align}\label{quo_tbetaf}
\frac{t_{S_\beta}(f(x),f(y))}{t_{S_\alpha}(x,y)}
=\frac{|1-(re^{(k-h)i})^{\frac{\beta}{\alpha}}|(|1-re^{(k-h)i}|+\sin(\gamma)+r\sin(\mu))}{|1-re^{(k-h)i}|(|1-(re^{(k-h)i})^{\frac{\beta}{\alpha}}|+\sin(\frac{\beta}{\alpha}\gamma)+r^{\frac{\beta}{\alpha}}\sin(\frac{\beta}{\alpha}\mu))},
\end{align}
where $\mu=\min\{h,\alpha-h\}$ and $\gamma=\min\{k,\alpha-k\}$. This quotient is strictly decreasing with respect to $r$ and, since $r\geq1$, it attains its maximum value when $r=1$. Consequently, the quotient \eqref{quo_tbetaf} has an upper limit of
\begin{align}\label{quo_uppbeta}
\frac{\sin(\frac{\beta}{2\alpha}(k-h))(2\sin(\frac{k-h}{2})+\sin(\gamma)+\sin(\mu))}{\sin(\frac{k-h}{2})(2\sin(\frac{\beta}{2\alpha}(k-h))+\sin(\frac{\beta}{\alpha}\gamma)+\sin(\frac{\beta}{\alpha}\mu))}.    
\end{align}
The value of the quotient above is at greatest, when $k-h$ is at minimum and both $\gamma$ and $\mu$ are at maximum. This happens when $h<\alpha\slash2$ and $k=\alpha-h$. Now, $\gamma=\mu=h$ and the quotient \eqref{quo_uppbeta} is
\begin{align*}
\frac{\sin(\frac{\beta}{2\alpha}(\alpha-2h))(\sin(\frac{\alpha}{2}-h)+\sin(h))}{\sin(\frac{\alpha}{2}-h)(\sin(\frac{\beta}{2\alpha}(\alpha-2h))+\sin(\frac{\beta}{\alpha}h))}.    
\end{align*}
Since the expression above is strictly increasing with respect to $h$ and $h<\alpha\slash2$, the maximum value of the quotient \eqref{quo_tbetaf} is
\begin{align}\label{tf_lim}
\lim_{h\to\frac{\alpha}{2}^-}\left(\frac{\sin(\frac{\beta}{2\alpha}(\alpha-2h))(\sin(\frac{\alpha}{2}-h)+\sin(h))}{\sin(\frac{\alpha}{2}-h)(\sin(\frac{\beta}{2\alpha}(\alpha-2h))+\sin(\frac{\beta}{\alpha}h))}\right)=\frac{\beta\sin(\alpha\slash2)}{\alpha\sin(\beta\slash2)},
\end{align}
which, together with the inequality \eqref{tfalphabeta_ine}, proves the first part of our theorem. Suppose next that $\alpha>\beta$ instead. It can be now proved that the minimum value of the quotient \eqref{quo_tbetaf} is the same limit value \eqref{tf_lim} and, by Theorem \ref{t.jps_bounds}(3) and \cite[Lemma 5.11, p. 13]{sqm}, $t_{S_\beta}(f(x),f(y))\leq2t_{S_\alpha}(x,y)$. Thus, the theorem follows.
\end{proof}

Let us now consider a more general result than the one above. Namely, instead of studying a conformal power mapping, we can assume that, for domains $G_1,G_2\subset\R^2$, the mapping $f:G_1\to G_2=f(G_1)$ is a $K$-quasiconformal homeomorphism, see \cite[Ch. 2]{v71}. Let $c(K)$ be as in \cite[Thm 16.39, p. 313]{hkvbook}. Now, $c(K)\geq K$ and $c(K)\to1$ whenever $K\to1$. See also the book \cite{gh} by F.W. Gehring and K. Hag.

\begin{theorem}
If $\alpha,\beta\in(0,2\pi)$ and $f:S_\alpha\to S_\beta=f(S_\alpha)$ is a $K$-quasiconformal homeomorphism, the following inequalities hold for all $x,y\in S_\alpha$.
\begin{align*}
&(1)\quad 
\frac{\beta}{2c(K)^K\pi\sin(\beta\slash2)}t_{S_\alpha}(x,y)^K\leq t_{S_\beta}(f(x),f(y))\leq c(K)(\frac{\pi}{\alpha}\sin(\frac{\alpha}{2}))^{1\slash K}t_{S_\alpha}(x,y)^{1\slash K}\\
&\quad\quad\text{if }\alpha,\beta\in(0,\pi],\\
&(2)\quad
\frac{1}{2c(K)^K}t_{S_\alpha}(x,y)^K\leq t_{S_\beta}(f(x),f(y))\leq \frac{c(K)\pi}{\beta}(\frac{\pi}{\alpha}\sin(\frac{\alpha}{2}))^{1\slash K}t_{S_\alpha}(x,y)^{1\slash K}\\
&\quad\quad\text{if }\alpha\in(0,\pi)\text{ and }\beta\in(\pi,2\pi),\\
&(3)\quad
\frac{1}{2}(\frac{\alpha}{c(K)\pi})^Kt_{S_\alpha}(x,y)^K\leq t_{S_\beta}(f(x),f(y))\leq \frac{c(K)\pi}{\beta}t_{S_\alpha}(x,y)^{1\slash K}\\
&\quad\quad\text{if }\alpha,\beta\in[\pi,2\pi).
\end{align*}
\end{theorem}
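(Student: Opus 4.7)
The plan is to reduce every estimate to a known quasiconformal distortion statement for the triangular ratio metric, and pay for the reduction only through the factor $2$ coming from Theorem~\ref{t.jps_bounds}(3). Explicitly, that theorem gives
\begin{align*}
\frac{s_{S_\alpha}(x,y)}{2}\leq t_{S_\alpha}(x,y)\leq s_{S_\alpha}(x,y),\qquad \frac{s_{S_\beta}(f(x),f(y))}{2}\leq t_{S_\beta}(f(x),f(y))\leq s_{S_\beta}(f(x),f(y)),
\end{align*}
so if I can prove the analogous inequality with $s$ in place of $t$ on both sides, the passage back to $t$ costs at most one extra factor of $2$, which matches the constants appearing in the claimed bounds (note that the factor $2$ shows up only in the lower bounds, and no corresponding factor is needed in the upper bounds).

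For the $s$-metric step itself I would combine two ingredients. First, the conformal power maps $z\mapsto z^{\pi/\alpha}:S_\alpha\to \uhp^2$ and $z\mapsto z^{\pi/\beta}:S_\beta\to\uhp^2$ convert $f$ into a $K$-quasiconformal self-map of the upper half-plane, and \cite[Lemma 5.11, p.~13]{sqm} supplies the sector-to-halfplane comparison for $s$: an extra factor $(\pi/\alpha)\sin(\alpha/2)$ when $\alpha\in(0,\pi]$ and $\pi/\alpha$ when $\alpha\in[\pi,2\pi)$ (and similarly for $\beta$). Second, the standard quasiconformal distortion of the triangular ratio metric in $\uhp^2$, based on \cite[Thm 16.39, p.~313]{hkvbook}, gives
\begin{align*}
s_{\uhp^2}(g(u),g(v))\leq c(K)\,s_{\uhp^2}(u,v)^{1/K}
\end{align*}
and the reverse inequality $s_{\uhp^2}(u,v)\leq c(K)^K s_{\uhp^2}(g(u),g(v))^K$ for $K$-quasiconformal self-maps $g$ of $\uhp^2$. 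Concatenating a sector-to-halfplane comparison, this half-plane distortion, and a halfplane-to-sector comparison produces both the one-sided estimates with exponent $1/K$ and the reverse estimates with exponent $K$.

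The three cases of the theorem correspond to whether $\alpha$ and $\beta$ lie above or below $\pi$, because this determines whether the conformal change-of-variables factor $(\pi/\theta)\sin(\theta/2)$ appears on the domain or target side of the estimate, and whether the angle factor $\pi/\theta$ appears at all (it is trivial when $\theta=\pi$). So in each case I would: (i) invoke Theorem~\ref{t.jps_bounds}(3) to pass from $t$ to $s$; (ii) invoke the appropriate one of the three sub-cases of \cite[Lemma 5.11, p.~13]{sqm} to transfer $s$ between the sector and $\uhp^2$; (iii) invoke the half-plane distortion inequality; (iv) transfer $s$ back from $\uhp^2$ to the other sector; and (v) apply Theorem~\ref{t.jps_bounds}(3) again to get $t$ back on the other side.

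The main obstacle is purely bookkeeping: making sure the exponent on the angle-dependent factor is $1$ (not $1/K$ or $K$) in each final bound, since the angle factors enter before/after the distortion step and therefore escape the $1/K$-exponentiation. One has to be careful that the factor $(\pi/\alpha)\sin(\alpha/2)$ is taken inside the $1/K$-power on the upper bound (since it rides along with the domain $s$-metric through the halfplane distortion), while the factor $\pi/\beta$ (or $\beta/(\pi\sin(\beta/2))$) on the target side appears outside, without being exponentiated. Once this is tracked case by case, the stated constants fall out and the proof is complete.
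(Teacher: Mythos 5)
Your high-level route is the same as the paper's: the paper proves this theorem in one line by combining Theorem \ref{t.jps_bounds}(3) with the sector distortion result for the triangular ratio metric, \cite[Cor. 5.7, p. 12]{sqm}, and your steps (ii)--(iv) amount to re-deriving that corollary through the half-plane (power maps, the comparison of $s$ with ${\rm th}(\rho\slash2)$ in sectors, and the quasiconformal Schwarz lemma \cite[Thm 16.39, p. 313]{hkvbook} together with $s_{\uhp^2}={\rm th}(\rho_{\uhp^2}\slash2)$). That architecture is sound.

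The genuine gap is exactly the step you wave off as bookkeeping, namely the parenthetical claim that ``no corresponding factor is needed in the upper bounds.'' Your upper chain ends in $s_{S_\alpha}(x,y)^{1\slash K}$, and the only way back to the $t$-metric on the domain side is $s_{S_\alpha}\leq 2t_{S_\alpha}$, which multiplies every upper constant by $2^{1\slash K}$: in case (1) the chain honestly yields $c(K)\bigl(\tfrac{2\pi}{\alpha}\sin\tfrac{\alpha}{2}\bigr)^{1\slash K}t_{S_\alpha}(x,y)^{1\slash K}$, not the stated constant. Moreover, this factor cannot be removed by cleverer tracking. Take $\alpha=\beta=\pi$ and let $f$ be a M\"obius self-map of $\uhp^2$, which is $K$-quasiconformal for every $K>1$. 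Since $s_{\uhp^2}={\rm th}(\rho_{\uhp^2}\slash2)$ is M\"obius invariant while the ratio $t_{\uhp^2}\slash s_{\uhp^2}$ comes arbitrarily close to both $1\slash2$ (radial pairs $x=ki$, $y=i$) and $1$ (horizontal pairs $x=ki$, $y=1+ki$) --- the paper's own sharpness examples for Theorem \ref{t.jps_bounds} --- a M\"obius map can carry a pair with $t\approx s\slash2$ to a pair with $t\approx s$, so $\sup t_{\uhp^2}(f(x),f(y))\slash t_{\uhp^2}(x,y)=2$ (consistent with the sharp constant $2$ established around Theorem \ref{thm_tmobius}); any bound of the form $c(K)t^{1\slash K}$ with $c(K)\to1$ as $K\to1$ is therefore violated. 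Had you actually executed step (v), you would also have found that the other ``stated constants'' do not fall out of the chain either: in case (3) the identity map (which is $K$-quasiconformal for every $K$) is incompatible with the upper factor $c(K)\pi\slash\beta$ as $K\to1$ since $\pi\slash\beta<1$, and the chain produces the lower constant $\tfrac{1}{2}(\pi\slash(c(K)\alpha))^K$ rather than $\tfrac{1}{2}(\alpha\slash(c(K)\pi))^K$ (a short radial pair on the bisector of a near-slit sector under $z\mapsto z^{\pi\slash\alpha}$ gives $t_f\approx t\slash2$, ruling the latter out at $K=1$). In short, your scheme proves a theorem of this shape, but only with corrected constants; the discrepancies you deferred to bookkeeping are real, and a complete proof must either derive the printed constants or flag them as unattainable --- yours does neither.
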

\begin{proof}
Follows from Theorem \ref{t.jps_bounds}(3) and \cite[Cor. 5.7, p. 12]{sqm}.
\end{proof}

Next, we will focus on the radial mapping, which is another example of a quasiconformal mapping, see \cite[16.2, p. 49]{v71}.

\begin{theorem}
If $f:G\to G$ with $G=\B^2\backslash\{0\}$ is the radial mapping defined as $f(z)=|z|^{a-1}z$ for some $0<a<1$, then for all $x,y\in G$ such that $|x|=|y|$, the sharp inequality
\begin{align*}
t_{\B^2\backslash\{0\}}(x,y)\leq t_{\B^2\backslash\{0\}}(f(x),f(y))\leq\frac{1}{2^a-1}t_{\B^2\backslash\{0\}}(x,y)
\end{align*}
holds.
\end{theorem}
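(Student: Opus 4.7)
I would exploit the strong rotational symmetry of the setup. Since $f(z)=|z|^{a-1}z$ preserves arguments and $|x|=|y|=r$, both $f(x)$ and $f(y)$ lie on the circle of radius $r^a$ and subtend the same central angle $\theta$ as $x,y$. Writing $s=\sin(\theta/2)\in(0,1]$ yields $|x-y|=2rs$ and $|f(x)-f(y)|=2r^{a}s$. Because $\partial G=S^1\cup\{0\}$, the boundary distances are $d_G(x)=\min\{r,1-r\}$ and $d_G(f(x))=\min\{r^a,1-r^a\}$, so the whole problem collapses to bounding the two-variable ratio $R(r,s):=t_G(f(x),f(y))/t_G(x,y)$ on $(0,1)\times(0,1]$. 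Since $0<a<1$ forces $r^a\geq r$, the case $r\geq1/2$ automatically gives $r^a\geq1/2$, leaving only three regimes.

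I would treat them separately: (A) $r^a\leq1/2$ (so $r<1/2$), where the boundary pieces cancel and $R\equiv1$; (B) $r<1/2\leq r^a$, where a direct simplification gives $R=r^a(1+s)/(1-r^a(1-s))$; and (C) $r\geq1/2$, where the identity $r^{a-1}(1-r)-(1-r^a)=r^{a-1}-1$ rewrites the ratio as $R=1+(r^{a-1}-1)/(1-r^a(1-s))$. Regime (A) proves the lower bound with equality, giving its sharpness for free. In (B), $R\geq1$ is equivalent to $2r^a\geq1$, the defining condition of the regime; in (C), $R>1$ follows from $r^{a-1}>1$.

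For the upper bound, in both non-trivial regimes $R(r,\cdot)$ is decreasing in $s$ (a direct check on the sign of the partial derivative), so the supremum over $s\in(0,1]$ is the limit as $s\to0^+$. This limit equals $r^a/(1-r^a)$ in (B) and $g(r):=r^{a-1}(1-r)/(1-r^a)$ in (C). The map $r\mapsto r^a/(1-r^a)$ is increasing, so on (B)'s range $[2^{-1/a},1/2]$ its supremum is $2^{-a}/(1-2^{-a})=1/(2^a-1)$. It remains to show that $g$ attains its maximum on $[1/2,1)$ at the left endpoint, with $g(1/2)=1/(2^a-1)$ and $g(1^-)=1/a$. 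Sharpness of the upper bound will then follow by taking $|x|=|y|=1/2$ with angular separation $\theta\to0^+$.

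The main obstacle is this monotonicity of $g$ on $[1/2,1)$. I would prove $g(r)\leq 1/(2^a-1)$ by showing $h(r):=1+(2^a-2)r^a-(2^a-1)r^{a-1}\geq 0$ on $[1/2,1]$. A direct computation gives $h(1/2)=h(1)=0$, and $h'(r)=r^{a-2}[a(2^a-2)r+(1-a)(2^a-1)]$ is linear in $r$ up to a positive prefactor, so $h'$ has at most one zero and $h$ at most one interior critical point in $(1/2,1)$. Checking $h'(1)=2^a-1-a\leq 0$ (the standard inequality $2^a\leq 1+a$ on $[0,1]$) and $h'(1/2)=2^{1-a}((2-a)2^a-2)>0$ (which reduces to $(2-a)2^a>2$, a consequence of the concavity of $a\mapsto(2-a)2^a$ on $[0,1]$ together with its common endpoint value $2$) forces the unique critical point to be a maximum; combined with the two vanishing endpoints this yields $h\geq 0$ on $[1/2,1]$, completing the argument.
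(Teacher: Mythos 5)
Your proof is correct and takes essentially the same route as the paper's: the identical parametrization ($|x|=|y|=r$, half-angle giving $|x-y|=2rs$, $|f(x)-f(y)|=2r^as$, $d_G(x)=\min\{r,1-r\}$), the same three-case split according to the positions of $r$ and $r^a$ relative to $1/2$, equality $R\equiv1$ in the innermost regime, and monotonicity in the angle followed by optimization in $r$, with the same sharpness limits. The one point where you go beyond the paper is the final bound $g(r)=r^{a-1}(1-r)/(1-r^a)\leq 1/(2^a-1)$ on $[1/2,1)$: the paper merely asserts that the corresponding expression is decreasing in $r$, whereas your auxiliary function $h(r)=1+(2^a-2)r^a-(2^a-1)r^{a-1}$, with $h(1/2)=h(1)=0$, $h'(1/2)>0\geq h'(1)$, and at most one interior critical point, actually proves the needed inequality (and your endpoint checks $2^a\leq 1+a$ and $(2-a)2^a>2$ are valid), so your write-up is, if anything, more complete than the published proof.
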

\begin{proof}
Fix $x=re^{ki}$ and $y=re^{-ki}$ with $0<r<1$ and $0<k<\pi\slash2$. Now, $f(x)=r^ae^{ki}$ and $f(y)=r^ae^{-ki}$. Consider the quotient
\begin{align}\label{tfb0_quo}
\frac{t_G(f(x),f(y))}{t_G(x,y)}=\frac{|f(x)-f(y)|(|x-y|+d_G(x)+d_G(y))}{|x-y|(|f(x)-f(y)|+d_G(f(x))+d_G(f(y)))},    
\end{align}
where
\begin{align*}
&|x-y|=2r\sin(k),\\
&|f(x)-f(y)|=2r^a\sin(k),\\
&d_G(x)=d_G(y)=\min\{r,1-r\},\\
&d_G(f(x))=d_G(f(y))=\min\{r^a,1-r^a\}.
\end{align*}

If $0<r<r^a<1\slash2$, the quotient \eqref{tfb0_quo} is
\begin{align*}
\frac{t_G(f(x),f(y))}{t_G(x,y)}=\frac{r^a(r\sin(k)+r)}{r(r^a\sin(k)+r^a)}=1.    
\end{align*}

If $0<r\leq1\slash2<r^a<1$, the quotient \eqref{tfb0_quo} is
\begin{align}\label{tfb0_quo_0}
\frac{t_G(f(x),f(y))}{t_G(x,y)}=\frac{r^a(r\sin(k)+r)}{r(r^a\sin(k)+1-r^a)},   
\end{align}
which is decreasing with respect to $k$. Since
\begin{align*}
\lim_{k\to0^+}\left(\frac{r^a(r\sin(k)+r)}{r(r^a\sin(k)+1-r^a)}\right)=\frac{r^{1+a}}{r(1-r^a)}=\frac{r^a}{1-r^a}    
\end{align*}
is increasing with respect to $r$ and $r\leq1\slash2$, the maximum value of the quotient \eqref{tfb0_quo_0} is $1\slash(2^a-1)$. The other limit value
\begin{align*}
\lim_{k\to1^-}\left(\frac{r^a(r\sin(k)+r)}{r(r^a\sin(k)+1-r^a)}\right)=\frac{r^a(r+r)}{r(r^a+1-r^a)}=2r^a    
\end{align*}
is increasing with respect to $r^a$ and $r^a>1\slash2$, so the quotient \eqref{tfb0_quo_0} is always more than 1.

If $1\slash2<r<r^a<1$, the quotient \eqref{tfb0_quo} is
\begin{align}\label{tfb0_quo_1}
\frac{t_{\B^2\backslash\{0\}}(f(x),f(y))}{t_{\B^2\backslash\{0\}}(x,y)}=\frac{r^a(r\sin(k)+1-r)}{r(r^a\sin(k)+1-r^a)},   
\end{align}
which is decreasing with respect to $k$. Since $r>1\slash2$ and
\begin{align*}
\lim_{k\to0^+}\left(\frac{r^a(r\sin(k)+1-r)}{r(r^a\sin(k)+1-r^a)}\right)=\frac{r^a(1-r)}{r(1-r^a)},   
\end{align*}
is decreasing with respect to $r$, the quotient \eqref{tfb0_quo_0} is less than $1\slash(2^a-1)$. The other limit value is
\begin{align*}
\lim_{k\to1^-}\left(\frac{r^a(r\sin(k)+1-r)}{r(r^a\sin(k)+1-r^a)}\right)=r^{a-1},    
\end{align*}
which is clearly more than 1. 

Thus, the minimum value of the quotient \eqref{tfb0_quo} is 1 and the maximum value $1\slash(2^a-1)$, so the theorem follows.
\end{proof}

Let us now find Lipschitz constants of a few different mappings for the $t$-metric. 

\begin{theorem}\label{thm_tmobius}
For all conformal mappings $f:G_1\to G_2=f(G_1)$ with $G_1,G_2\in\{\uhp^n,\B^n\}$, the inequality 
\begin{align*}
\frac{1}{2}t_{G_1}(x,y)
\leq t_{G_2}(f(x),f(y))
\leq2t_{G_1}(x,y)    
\end{align*}
holds for all $x,y\in G_1$.
\end{theorem}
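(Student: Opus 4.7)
The plan is to reduce everything to Theorem \ref{rhot} together with the conformal invariance of the hyperbolic metric. Recall that every conformal map $f:G_1\to G_2$ with $G_1,G_2\in\{\uhp^n,\B^n\}$ is a Möbius transformation (Liouville's theorem for $n\ge 3$, or a Möbius self-map composed with a standard identification for $n=2$) and hence an isometry of the hyperbolic metric, so
\begin{equation*}
\rho_{G_2}(f(x),f(y)) = \rho_{G_1}(x,y) \quad\text{for all } x,y\in G_1.
\end{equation*}
This is the only fact about $f$ that I will actually use; all the work about Euclidean distances, $d_G$ and the definition of $t_G$ is packaged inside Theorem \ref{rhot}.

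Next I would apply Theorem \ref{rhot} on both sides. For the upper bound, the right-hand inequality in Theorem \ref{rhot} applied to $G_2$ at the points $f(x),f(y)$ gives
\begin{equation*}
t_{G_2}(f(x),f(y)) \le \mathrm{th}\frac{\rho_{G_2}(f(x),f(y))}{2} = \mathrm{th}\frac{\rho_{G_1}(x,y)}{2},
\end{equation*}
and then the left-hand inequality of Theorem \ref{rhot} for $G_1$ yields $\mathrm{th}(\rho_{G_1}(x,y)/2)\le 2\,t_{G_1}(x,y)$, so composing gives $t_{G_2}(f(x),f(y))\le 2\,t_{G_1}(x,y)$. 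For the lower bound I run the same argument in reverse: by Theorem \ref{rhot} applied in $G_2$,
\begin{equation*}
t_{G_2}(f(x),f(y)) \ge \frac{1}{2}\,\mathrm{th}\frac{\rho_{G_2}(f(x),f(y))}{2} = \frac{1}{2}\,\mathrm{th}\frac{\rho_{G_1}(x,y)}{2},
\end{equation*}
and Theorem \ref{rhot} applied in $G_1$ gives $\mathrm{th}(\rho_{G_1}(x,y)/2)\ge t_{G_1}(x,y)$, producing $t_{G_2}(f(x),f(y))\ge \tfrac{1}{2}t_{G_1}(x,y)$.

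There is essentially no obstacle: the argument is a two-line sandwich once conformal invariance of $\rho$ is invoked. The only mild subtlety worth flagging is justifying that a conformal map between $\uhp^n$ and/or $\B^n$ really is a hyperbolic isometry — in dimension $n\ge 3$ this needs Liouville's theorem identifying conformal maps with Möbius transformations, while for $n=2$ one notes that the conformal self-maps of $\uhp^2$ and $\B^2$ used in the hyperbolic-metric literature coincide with the Möbius self-maps and hence preserve $\rho$. Once this is acknowledged, the two chains of inequalities above close the proof and the constants $1/2$ and $2$ are inherited directly from the sharp constants in Theorem \ref{rhot}.
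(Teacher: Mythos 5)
Your proposal is correct and is essentially identical to the paper's proof: both sandwich $t$ between $\frac{1}{2}\mathrm{th}(\rho/2)$ and $\mathrm{th}(\rho/2)$ via Theorem \ref{rhot} in each domain and link the two sides by the conformal invariance of the hyperbolic metric, yielding exactly the same two chains of inequalities. Your extra remark justifying why such conformal maps are hyperbolic isometries (Liouville's theorem for $n\geq 3$) is a sensible elaboration of what the paper simply invokes as ``conformal invariance.''
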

\begin{proof}
By Theorem \ref{rhot} and the conformal invariance of the hyperbolic metric,
\begin{align*}
\frac{1}{2}t_{G_1}(x,y)
&\leq\frac{1}{2}\text{th}\frac{\rho_{G_1}(x,y)}{2}
=\frac{1}{2}\text{th}\frac{\rho_{G_2}(f(x),f(y))}{2}
\leq t_{G_2}(f(x),f(y))\\
&\leq\text{th}\frac{\rho_{G_2}(f(x),f(y))}{2}
=\text{th}\frac{\rho_{G_1}(x,y)}{2}
\leq2t_{G_1}(x,y).
\end{align*}
\end{proof}

It follows from Theorem \ref{thm_tmobius} that the Lipschitz constant Lip$(f|G_1)$ for the $t$-metric in any conformal mapping $f:G_1\to G_2=f(G_1)$, $G_1,G_2\in\{\uhp^n,\B^n\}$, is at most 2. Suppose now that $h$ is the M\"obius transformation $h:\B^2\to\uhp^2$, $h(z)=(1-z)i\slash(1+z)$. Since, for $x=0$ and $y=\frac{1-k}{k+1}$ with $0<k<1$,
\begin{align*}
\lim_{k\to1^-}\left(\frac{t_{\uhp^n}(h(x),h(y))}{t_{\B^n}(x,y)}\right)=\lim_{k\to1^-}(k+1)=2,
\end{align*}
the Lipschitz constant Lip$(h|\B^2)$ is equal to 2. However, for certain M\"obius transformations, there might be a better constant than 2. For instance, the following conjecture is supported by several numerical tests.

\begin{conjecture}\label{conj_tmobius}
For all $a,x,y\in\B^2$, the Möbius transformation $T_a:\B^2\to\B^2$, $T_a(z)=(z-a)\slash(1-\overline{a}z)$ fulfills the inequality
\begin{align*}
t_{\B^2}(T_a(x),T_a(y))\leq(1+|a|)t_{\B^2}(x,y).
\end{align*}
\end{conjecture}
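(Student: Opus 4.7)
The plan is to reduce the conjectured inequality to a concrete algebraic statement in the M\"obius data of $a$, $x$, $y$, and then attack that algebraic statement. By the rotational invariance of $t_{\B^2}$ and the fact that every relevant quantity depends only on the two-dimensional plane spanned by $\{x,y,0\}$, we may rotate so that $a=|a|\in[0,1)$ is real and work in $\C$. Set $\alpha=|1-ax|$, $\beta=|1-ay|$, $m_x=|x-a|$, $m_y=|y-a|$, $N=|x-y|$, $D_x=1-|x|$, $D_y=1-|y|$. Using $|T_a(z)|=|z-a|/|1-az|$ together with the factorisation $\alpha^2-m_x^2=(1-a^2)(1-|x|^2)$, one obtains $d_{\B^2}(T_a(x))=(\alpha-m_x)/\alpha$ and $d_{\B^2}(T_a(y))=(\beta-m_y)/\beta$. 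Substituting these into $t_{\B^2}$ together with $|T_a(x)-T_a(y)|=(1-a^2)N/(\alpha\beta)$, cross-multiplying, and cancelling the common factor $(1+a)/(\alpha\beta)$ shows that the conjecture is equivalent to
\begin{equation*}
(1-a)(D_x+D_y-aN)\;\leq\;\beta(\alpha-m_x)+\alpha(\beta-m_y). \tag{$\star$}
\end{equation*}

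The right-hand side of $(\star)$ is manifestly nonnegative, so the inequality is trivial whenever $D_x+D_y\leq aN$; the actual content lies in the complementary range. A first estimate in that range uses the identity $\alpha-m_x=(1-a^2)(1+|x|)D_x/(\alpha+m_x)$ together with the triangle bound $\alpha+m_x=|1-ax|+|x-a|\leq(1+a)(1+|x|)$, which give $\beta(\alpha-m_x)\geq(1-a)\beta D_x$ and symmetrically $\alpha(\beta-m_y)\geq(1-a)\alpha D_y$. This reduces $(\star)$ to $D_x(1-\beta)+D_y(1-\alpha)\leq aN$, which is immediate when $\alpha,\beta\geq 1$ but can fail pointwise when $x$ or $y$ has a large component along $a$. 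In that regime the triangle bound above is strictly loose---equality in it would force $x$ (or $y$) to be real and antiparallel to $a$---so the recovered slack, together with the $-aN$ term that was dropped, is what should close the argument.

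The principal obstacle is the final algebraic step: $(\star)$ is a polynomial inequality in the real variables $x_1,x_2,y_1,y_2,a$ in which $\alpha,\beta,m_x,m_y$ appear as square roots, so direct expansion yields many mixed terms. The plan is a boundary analysis on the compactified parameter region, guided by the sharpness examples $x=0$, $y=te_1\to 0$ and $x=ke_1$, $y=-ke_1$, $k\to 0^+$, both of which realise the ratio $1+a$ in the limit; these localise the extremal configurations near $|x|,|y|\to 0$, where a Taylor expansion to second order should suffice, with strict inequality in the interior then closed by continuity. A conceptual proof---ideally identifying $(\alpha-m_x)/D_x$ with the appropriate hyperbolic conformal factor of $T_a$ at $x$---would be far preferable to such a case analysis, and the absence of such an interpretation is precisely what has so far kept the statement a conjecture.
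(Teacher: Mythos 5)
First, note that the paper itself does not prove this statement: it appears as Conjecture~\ref{conj_tmobius}, supported only by numerical tests, and Remark~\ref{rmk_sbmobius} records that the analogous inequalities for the triangular ratio and Barrlund metrics are likewise open. So there is no proof in the paper to compare against, and your submission has to stand on its own as a complete argument. It does not: it is a correct reduction followed by an unexecuted plan. The sound part is the reduction. The identities $d_{\B^2}(T_a(x))=(\alpha-m_x)/\alpha$, $|T_a(x)-T_a(y)|=(1-a^2)N/(\alpha\beta)$, and $\alpha^2-m_x^2=(1-a^2)(1-|x|^2)$ are all correct, and cross-multiplying does yield the clean equivalent form $(\star)$, namely $(1-a)(D_x+D_y-aN)\leq\beta(\alpha-m_x)+\alpha(\beta-m_y)$ for $N>0$ (the case $N=0$ being trivial). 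Your first estimate $\beta(\alpha-m_x)\geq(1-a)\beta D_x$ is also correct, but, as you concede, it only reduces $(\star)$ to $D_x(1-\beta)+D_y(1-\alpha)\leq aN$, and that sufficient condition genuinely fails: take $a\in(0,1)$, $x=r\in(0,1)$ real, and $y$ a small perpendicular perturbation of $x$; then $\alpha\approx\beta\approx 1-ar<1$, the left side is approximately $2ar(1-r)>0$, while $aN\to0$. So this route collapses exactly in the regime where the conjecture has content.

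The fallback you sketch does not repair this. Localising the extremal configurations near $|x|,|y|\to0$ via the two sharpness families, Taylor-expanding to second order there, and then ``closing by continuity'' is not an argument: a local expansion at one boundary stratum of the compactified parameter region gives no global control; continuity cannot upgrade ``the inequality holds near the candidate extremals'' to ``the inequality holds everywhere,'' since the latter is precisely the assertion to be proved; and you have not shown that the supremum of the ratio $t_{\B^2}(T_a(x),T_a(y))/t_{\B^2}(x,y)$ is attained in, or only approached at, the regime you expand around. Until $(\star)$ is verified on the entire region---by global monotonicity, an exhaustive case analysis, or the conformal-factor interpretation you mention at the end---the statement remains exactly what the paper says it is: a conjecture. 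Your equivalent form $(\star)$ is nonetheless a genuinely useful contribution, since it turns the conjecture into a concrete algebraic target that is visibly trivial when $D_x+D_y\leq aN$.
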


\begin{remark}\label{rmk_sbmobius}
It is also an open question whether the inequality of Conjecture \ref{conj_tmobius} holds for the triangular ratio metric or so called Barrlund metric, but numerical tests suggest so, see \cite[Conj. 1.6, p. 684]{chkv} and \cite[Conj. 4.3, p. 25]{fmv}.
\end{remark}

In the next few results, we will study a mapping $f^*:S_\theta\to S_\theta$, $f^*(x)=x\slash|x|^2$ defined in some open sector $S_\theta$, and find its Lipschitz constants for the $t$-metric.

\begin{theorem}
If $\theta\in(0,\pi]$ and $f^*$ is the mapping $f^*:S_\theta\to S_\theta$, $f^*(x)=x\slash|x|^2$, the Lipschitz constant Lip$(f^*|S_\theta)$ for the $t$-metric is $1+\sin(\theta\slash2)$.
\end{theorem}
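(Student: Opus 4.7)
The plan is to exploit the two elementary identities that make the inversion $f^*(x)=x/|x|^2$ tractable: $|f^*(x)-f^*(y)|=|x-y|/(|x||y|)$ for all nonzero $x,y$, and $\arg f^*(x)=\arg x$, so $f^*$ preserves every ray from the origin and in particular stabilises $S_\theta$. Combined with the explicit distance formula valid precisely when $\theta\in(0,\pi]$, namely $d_{S_\theta}(x)=|x|\sin\mu_x$ with $\mu_x:=\min\{\arg x,\,\theta-\arg x\}\in(0,\theta/2]$, this gives $d_{S_\theta}(f^*(x))=\sin\mu_x/|x|$. The restriction $\theta\leq\pi$ enters here only to ensure that the foot of the perpendicular from $x$ to the nearest bounding ray actually lies on that ray.

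Introducing the abbreviations $a=|x|$, $b=|y|$, $s=\sin\mu_x$, $t=\sin\mu_y$, $L=|x-y|$ and substituting into Definition \ref{def_1}, a common factor of $1/(ab)$ cancels and one finds
\begin{align*}
t_{S_\theta}(f^*(x),f^*(y))=\frac{L}{L+bs+at},\qquad t_{S_\theta}(x,y)=\frac{L}{L+as+bt},
\end{align*}
so the quotient of $t$-distances equals $(L+as+bt)/(L+bs+at)$. The desired Lipschitz bound is therefore equivalent to
\begin{align*}
(b-a)(t-s)\leq \sin(\theta/2)\,(L+bs+at).
\end{align*}

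I would prove this by assuming without loss of generality that $a\leq b$: when $t\leq s$ the left-hand side is non-positive and nothing needs to be checked; when $t>s$, the bound $t\leq\sin(\theta/2)$ together with the reverse triangle inequality $L\geq|b-a|=b-a$ yields
\begin{align*}
(b-a)(t-s)\leq(b-a)t\leq(b-a)\sin(\theta/2)\leq L\sin(\theta/2)\leq\sin(\theta/2)(L+bs+at),
\end{align*}
so $\mathrm{Lip}(f^*|S_\theta)\leq 1+\sin(\theta/2)$. For the matching lower bound I would test the extremal sequence suggested by the chain above: $x_k=e^{i/k}$ and $y_k=k\,e^{i\theta/2}$, both in $S_\theta$ for $k$ large, so that $s_k=\sin(1/k)\to 0$, $t_k=\sin(\theta/2)$, $a=1$, $b=k$ and $L_k\sim k$; the quotient then tends to $k(1+\sin(\theta/2))/(k+\sin(\theta/2))\to 1+\sin(\theta/2)$.

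The only genuine subtlety I anticipate is the form of the extremal sequence: equality cannot be attained, and it is tempting to hope that $\mu_x\to0$ alone, or $|y|/|x|\to\infty$ alone, would suffice, but neither limit separately gives the full factor $1+\sin(\theta/2)$. Both must be driven to their extremes simultaneously, as the algebraic inequality above makes transparent by tracking where slack is lost.
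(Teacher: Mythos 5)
Your proof is correct and complete; it shares the paper's starting point but replaces the key step with a genuinely different and more rigorous argument. The paper also reduces via the sector distance formula $d_{S_\theta}(x)=|x|\sin\mu_x$ (valid exactly because $\theta\le\pi$) and similarity invariance, normalizing $|x|=1$, $y=re^{ki}$, and writing the quotient of $t$-distances explicitly in the parameters $(r,h,k)$; but it then locates the supremum by an unverified optimization claim (``to maximize this, we clearly need to choose $k=\theta\slash2$ and make $r$ and $h$ as small as possible'') followed by a limit computation. Your normalization-free symmetric form
\[
\frac{t_{S_\theta}(f^*(x),f^*(y))}{t_{S_\theta}(x,y)}=\frac{L+as+bt}{L+bs+at}
\]
turns that three-variable optimization into the single inequality $(b-a)(t-s)\le\sin(\theta\slash2)(L+bs+at)$, which your two-case argument settles using only $t\le\sin(\theta\slash2)$ (here $\mu_y\le\theta\slash2\le\pi\slash2$ is where $\theta\le\pi$ enters again) and the reverse triangle inequality $L\ge b-a$; this is exactly the monotonicity content the paper leaves implicit, so your route buys full rigor at essentially no extra cost. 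Your extremal sequence $x_k=e^{i\slash k}$, $y_k=ke^{i\theta\slash2}$ is the image under the involution $f^*$ of the paper's configuration $x=e^{ih}$, $y=re^{i\theta\slash2}$ with $h,r\to0^+$, so the two sharpness arguments are equivalent (the supremum of the quotient in one direction is the reciprocal of its infimum in the other, since $f^*$ is an involution). Two cosmetic points only: you use the letter $t$ both for the metric $t_{S_\theta}$ and for $\sin\mu_y$, which should be renamed to avoid the clash; and your closing observation that both degenerations ($\mu_x\to0$ and $|y|\slash|x|\to\infty$) must occur simultaneously is a correct diagnosis of why the constant is $1+\sin(\theta\slash2)$ rather than something smaller, a point the paper asserts but does not explain.
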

\begin{proof}
Without loss of generality, we can fix $x=e^{hi}$ and $y=re^{ki}$ with $0<h\leq\pi\slash2$, $h\leq k<\theta$ and $r>0$. Since $x^*=e^{hi}$ and $y^*=(1\slash r)e^{ki}$, it follows that 
\begin{align*}
\frac{t_{S_\theta}(x^*,y^*)}{t_{S_\theta}(x,y)}
=\frac{\sqrt{1+r^2-2r\cos(k-h)}+r\sin(h)+\sin(\min\{k,\theta-k\})}{\sqrt{1+r^2-2r\cos(k-h)}+\sin(h)+r\sin(\min\{k,\theta-k\})}.
\end{align*}
To maximize this, we clearly need to choose $k=\theta\slash2$ and make $r$ and $h$ as small as possible. If $k=\theta\slash2$,
\begin{align*}
\lim_{h\to0^+,\text{ }r\to0^+}\frac{t_{S_\theta}(x^*,y^*)}{t_{S_\theta}(x,y)}
=1+\sin(\theta\slash2),
\end{align*}
so the theorem follows.
\end{proof}

\begin{theorem}\label{f*_for_s_inS}
If $x^*=x\slash|x|^2$ and $y^*=y\slash|y|^2$, the equality $s_{S_\theta}(x,y)=s_{S_\theta}(x^*,y^*)$ holds in an open sector $S_\theta$ with $\theta\in(0,2\pi)$.
\end{theorem}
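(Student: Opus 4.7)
The plan is to exploit the inversion $\iota\colon z \mapsto z^* = z/|z|^2$ (in complex notation, $\iota(z) = 1/\bar z$). This map fixes every ray through the origin setwise and therefore carries $\partial S_\theta$ to itself (with the vertex swapped with the point at infinity). The classical inversion identity $|a^* - b^*| = |a-b|/(|a||b|)$ immediately gives $|x^* - y^*| = |x-y|/(|x||y|)$, so the numerators of $s_{S_\theta}(x^*,y^*)$ and $s_{S_\theta}(x,y)$ already differ by the factor $1/(|x||y|)$. It therefore remains to show that the denominator infima differ by the same factor.

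We handle the denominator ray-by-ray. The boundary $\partial S_\theta$ is the union of two closed rays emanating from the origin at angles $0$ and $\theta$. On the ray at angle $\phi \in \{0,\theta\}$, parametrize $z = tu$ with $u = e^{i\phi}$ and $t \geq 0$; the convex function $t \mapsto |x-tu|+|tu-y|$ attains its unconstrained minimum on $\R$ at the reflection parameter $t_*$, so the infimum of $|x-z|+|z-y|$ over this ray equals $|x - \bar y_\phi|$ if $t_* \geq 0$ and $|x|+|y|$ otherwise, where $\bar y_\phi = e^{2i\phi}\bar y$ denotes the reflection of $y$ across the line at angle $\phi$. Exactly the same dichotomy governs the inverted problem, with $\bar y_\phi$ replaced by $\overline{y^*}_\phi$ and with threshold $t_*^*$.

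A few elementary facts now close the argument. Since reflection across a line through the origin commutes with $\iota$, we have $\overline{y^*}_\phi = (\bar y_\phi)^*$, so the inversion identity yields $|x^* - \overline{y^*}_\phi| = |x-\bar y_\phi|/(|x||y|)$, and trivially $|x^*|+|y^*| = (|x|+|y|)/(|x||y|)$. Moreover, a direct computation shows that $t_* \geq 0$ is equivalent to $\operatorname{Im}(e^{-2i\phi}xy) \geq 0$, and because $x^* y^* = xy/(|x|^2|y|^2)$ is a positive real multiple of $xy$, exactly the same condition determines the sign of $t_*^*$. Hence the two dichotomies always select matching branches, so on each ray the infimum for the inverted problem is precisely $1/(|x||y|)$ times the one for the original. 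Taking the minimum over $\phi \in \{0, \theta\}$ preserves this scaling, and combining with the numerator identity gives $s_{S_\theta}(x^*, y^*) = s_{S_\theta}(x, y)$.

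The main obstacle I anticipate is the branch alignment in the third observation above: without it one would only get an inequality rather than equality, and one must genuinely verify that $t_*$ and $t_*^*$ have the same sign. The remaining ingredients --- the inversion identity and the commutation of reflection with inversion on rays through the origin --- are routine, and the whole argument is purely two-dimensional, requiring no hypothesis on $\theta$ beyond $\theta \in (0, 2\pi)$.
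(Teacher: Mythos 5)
Your strategy---the inversion identity $|x^*-y^*|=|x-y|/(|x||y|)$ for the numerator, a ray-by-ray Heron analysis for the denominator, and branch alignment under inversion---is sound, and for $\theta\in(0,\pi]$ your proof is complete; there it amounts to a streamlined version of the paper's first case, which instead compares the two reflected competitors $|\overline{x}-y|$ and $|x-y'|$ and verifies the scaling by the hands-on identity $|1-re^{i\alpha}|=|r-e^{i\alpha}|$. The genuine gap is in the regime $\theta\in(\pi,2\pi)$: your key lemma is asserted unconditionally but is valid only when $x$ and $y$ lie on the \emph{same side} of the line containing the boundary ray. As soon as $\theta>\pi$ the points can straddle that line (say $\arg x\in(0,\pi)$ and $\arg y\in(\pi,\theta)$ for the ray $\phi=0$), and then the convex function $t\mapsto|x-tu|+|tu-y|$ attains its unconstrained minimum not at a reflection parameter with value $|x-\overline{y}_\phi|$, but where the segment $[x,y]$ crosses the line, with value $|x-y|$; your sign criterion then selects the wrong branch. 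Concretely, take $\theta=3\pi/2$, $x=e^{0.9\pi i}$, $y=2e^{1.2\pi i}$, $\phi=0$: then $\operatorname{Im}(xy)=2\sin(0.1\pi)>0$, so your recipe returns $|x-\overline{y}_0|=|1-2e^{-0.1\pi i}|\approx1.09$, whereas the true infimum over the positive real axis is $|x|+|y|=3$ (the minimizer over all of $\R$ sits at $t\approx-1.09<0$); since $|x-\overline{y}_0|<|x-y|\approx1.63$, your value would even make the quotient for this ray exceed $1$. Because your argument only proves that these (sometimes wrong) recipe values scale by $1/(|x||y|)$, it does not prove the theorem for $\theta>\pi$. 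You also never treat the third possible branch, $\arg y-\arg x\ge\pi$, where $[x,y]$ exits the sector and the infimum is $|x-y|$ itself.

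The repair keeps your architecture and reproduces exactly the case division of the paper's proof. Writing $h=\arg x\le k=\arg y$: if $k-h\ge\pi$, the segment $[x,y]$ leaves the sector, so $s_{S_\theta}(x,y)=1=s_{S_\theta}(x^*,y^*)$, the latter because $x^*,y^*$ have the same arguments. If $k-h<\pi$, every point of $[x,y]$ has argument in $[h,k]\subset(0,\theta)$, so the segment stays inside $S_\theta$; hence in the straddling case the line crossing occurs at a negative parameter and the ray infimum is $|x|+|y|$ for the original and the starred pair alike. In the same-side case your computation (after rotating the ray onto the positive real axis) gives $t_*=r\sin(h+k)/(\sin h+r\sin k)$, whose sign depends only on the arguments---note that for $\phi=\theta$ the rotated points lie \emph{below} the axis and the criterion flips sign, which is harmless for matching since $x^*y^*$ is a positive real multiple of $xy$. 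With every branch decision a function of $(h,k,\theta)$ alone, branch alignment holds in all cases and your scaling argument closes the proof; the corrected branches correspond precisely to the paper's subcases $(h+k)/2\le\pi/2$ and $(h+k)/2\ge\theta-\pi/2$ (reflection in a side) and $\pi/2<(h+k)/2<\theta-\pi/2$ (vertex, infimum $|x|+|y|$).
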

\begin{proof}
Fix $x=e^{hi}$ and $y=re^{ki}$ where $r>0$ and $0<h\leq k<\theta$. Clearly, $x^*=x=e^{hi}$ and $y^*=(1\slash r)e^{ki}$. Suppose first that $\theta\leq\pi$. By the known solution to Heron's problem, the infimum $\inf_{z\in\partial S_\theta}(|x-z|+|z-y|)$ is $\min\{|\overline{x}-y|,|x-y'|\}$, where $y'$ is the point $y$ reflected over the left side of the sector $\theta$. Clearly,

\begin{align*}
&|\overline{x}-y|\leq|x-y'|
\quad\Leftrightarrow\quad
|e^{-hi}-re^{ki}|\leq|e^{hi}-re^{(2\theta-k)i}|\\
\Leftrightarrow\quad
&|1-re^{(h+k)i}|\leq|1-re^{(2\theta-h-k)i}|
\quad\Leftrightarrow\quad
h+k\leq2\theta-h-k
\quad\Leftrightarrow\quad
(h+k)\slash2\leq\theta\slash2.
\end{align*}

By symmetry, we can suppose that $(h+k)\slash2\leq\theta\slash2$ without loss of generality. Note that it follows from above that not only $\inf_{z\in\partial S_\theta}(|x-z|+|z-y|)=|\overline{x}-y|$ but also
$\inf_{z\in\partial S_\theta}(|x^*-z|+|z-y^*|)=|\overline{x^*}-y^*|$. Now,
\begin{align*}
s_{S_\theta}(x,y)&=\frac{|x-y|}{|\overline{x}-y|}=\frac{|1-re^{(k-h)i}|}{|1-re^{(k+h)i}|}=\frac{|1-re^{(h-k)i}|}{|1-re^{-(k+h)i}|}
=\frac{|r-e^{(k-h)i}|}{|r-e^{(k+h)i}|}\\
&=\frac{|1-(1\slash r)e^{(k-h)i}|}{|1-(1\slash r)e^{(k+h)i}|}
=\frac{|x^*-y^*|}{|\overline{x^*}-y^*|}=s_{S_\theta}(x^*,y^*).
\end{align*}

Consider now the case where $\theta>\pi$. If $k-h\geq\pi$, then $s_{S_\theta}(x,y)=1=s_{S_\theta}(x^*,y^*)$ always so suppose that $k-h<\pi$ instead. This leaves us three possible options. If $(h+k)\slash2\leq\pi\slash2$, then $(h+k)\slash2<\theta\slash2$ and
\begin{align*}
s_{S_\theta}(x,y)&=\frac{|x-y|}{|\overline{x}-y|}=\frac{|x^*-y^*|}{|\overline{x^*}-y^*|}=s_{S_\theta}(x^*,y^*),
\end{align*}
just like above. By symmetry, $s_{S_\theta}(x,y)=s_{S_\theta}(x^*,y^*)$ also if $(k+h)\slash2\geq\theta-\pi\slash2$. If $\pi\slash2<(k+h)\slash2<\theta-\pi\slash2$ instead, then
\begin{align*}
s_{S_\theta}(x,y)&=\frac{|x-y|}{|x|+|y|}=\frac{|1-re^{(k-h)i}|}{1+r}=\frac{|r-e^{(k-h)i}|}{1+r}=\frac{|1-(1\slash r)e^{(k-h)i}|}{1+1\slash r}\\
&=\frac{|x^*-y^*|}{|x^*|+|y^*|}=s_{S_\theta}(x^*,y^*).
\end{align*}
\end{proof}

\begin{theorem}
If $\theta\in[\pi,2\pi)$ and $f^*$ is the mapping $f^*:S_\theta\to S_\theta$, $f^*(x)=x\slash|x|^2$, the Lipschitz constant Lip$(f^*|S_\theta)$ for the $t$-metric is 2.
\end{theorem}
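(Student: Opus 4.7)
The plan is to establish both directions separately: the upper bound $\operatorname{Lip}(f^*|S_\theta)\leq 2$ will follow directly from the invariance of $s_{S_\theta}$ under $f^*$ (Theorem \ref{f*_for_s_inS}) combined with the two-sided comparison of Theorem \ref{t.jps_bounds}(3). For every $x,y\in S_\theta$ one has the chain
\begin{align*}
t_{S_\theta}(f^*(x),f^*(y))\leq s_{S_\theta}(f^*(x),f^*(y))=s_{S_\theta}(x,y)\leq 2\,t_{S_\theta}(x,y),
\end{align*}
so the ratio is bounded by $2$; no further hypothesis on $\theta$ is used here.

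For sharpness I would produce a one-parameter family $(x_\epsilon,y)$ for which the ratio tends to $2$. Fix $y=e^{i\theta/2}$: since $\theta/2\in[\pi/2,\pi)$, the angle from $y$ to each of the two bounding rays of $S_\theta$ is at least $\pi/2$, so the closest boundary point of $y$ is the origin, $d_{S_\theta}(y)=1$, and $f^*(y)=y$. Take $x_\epsilon=\epsilon e^{i\epsilon}$ with $\epsilon>0$ small, so that $x_\epsilon$ lies close to the bounding ray at argument $0$ and $d_{S_\theta}(x_\epsilon)=\epsilon\sin\epsilon$. As $\epsilon\to 0^+$ one has $|x_\epsilon-y|\to 1$ while $d_{S_\theta}(x_\epsilon)\to 0$, so $t_{S_\theta}(x_\epsilon,y)\to 1/2$. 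On the other hand $f^*(x_\epsilon)=\epsilon^{-1}e^{i\epsilon}$ satisfies $|f^*(x_\epsilon)-y|=\epsilon^{-1}+O(1)$ and $d_{S_\theta}(f^*(x_\epsilon))+d_{S_\theta}(y)=\sin(\epsilon)/\epsilon+1$ stays bounded, whence $t_{S_\theta}(f^*(x_\epsilon),f^*(y))\to 1$ and the ratio tends to $2$.

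The delicate point I expect is the choice of test points. A naive family with both $x$ and $y$ on the bisector produces a ratio identically equal to $1$: on a single radial ray $f^*$ reduces to the scalar inversion $r\mapsto 1/r$, and throughout the central wedge $\pi/2<\arg z<\theta-\pi/2$ one has $d_{S_\theta}(z)=|z|$, so a short computation shows that $t_{S_\theta}$ is preserved exactly. The required factor of $2$ can only come from the asymmetric radial behaviour of $d_{S_\theta}$ near a bounding ray versus inside the central wedge, and the family $(x_\epsilon,y)$ above is designed precisely to exploit this: $y$ lies in the central wedge, where $d_{S_\theta}$ is controlled by the origin, while $x_\epsilon$ lies near a bounding ray, where $d_{S_\theta}$ is governed by a perpendicular distance, so $d_{S_\theta}$ transforms under $f^*$ in fundamentally different ways at the two points.
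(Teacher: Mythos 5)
Your proposal is correct and takes essentially the same route as the paper: the upper bound comes from exactly the same chain $t_{S_\theta}(f^*(x),f^*(y))\le s_{S_\theta}(f^*(x),f^*(y))=s_{S_\theta}(x,y)\le 2\,t_{S_\theta}(x,y)$ via Theorems \ref{f*_for_s_inS} and \ref{t.jps_bounds}(3), and sharpness from a degenerating pair with one point near a bounding ray and the other in the central region where $d_{S_\theta}$ is governed by the origin. Your test family $(\epsilon e^{i\epsilon},\,e^{i\theta/2})$ merely replaces the paper's choice $x=e^{hi}$, $y=re^{\pi i\slash 2}$ with $h,r\to 0^+$, and your limit computation giving the ratio tending to $2$ checks out.
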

\begin{proof}
It follows from Theorems \ref{f*_for_s_inS} and \ref{t.jps_bounds}(3) that 
\begin{align*}
\frac{t_{S_\theta}(x,y)}{2}
\leq t_{S_\theta}(x^*,y^*)
\leq 2t_{S_\theta}(x,y)
\end{align*}
for all $x,y\in S_\theta$. Since for $x=e^{hi}$ and $y=re^{\pi i\slash2}$ with $h<\pi\slash2$ and $r>0$,
\begin{align*}
\lim_{h,r\to0^+}\frac{t_{S_\theta}(x^*,y^*)}{t_{S_\theta}(x,y)}=
\lim_{h,r\to0^+}\left(\frac{\sqrt{1+r^2-2r\cos(\pi\slash2-h)}+r\sin(h)+1}{\sqrt{1+r^2-2r\cos(\pi\slash2-h)}+\sin(h)+r}\right)
=2,
\end{align*}
and it follows that 
\begin{align*}
\sup\{\frac{t_{S_\theta}(x^*,y^*)}{t_{S_\theta}(x,y)}\text{ }|\text{ }x,y\in S_\theta,x\neq y,\theta\in[\pi,2\pi)\}=2.    
\end{align*}
\end{proof}

\section{Comparison of Metric Balls}\label{sect_metricballs}

Next, we will graphically demonstrate the differences and similarities between the various metrics considered in this paper by drawing for each metric several circles centered at the same point but with different radii. In all of the figures of this section, the domain $G\subset\R^2$ is a regular five-pointed star and the circles have a radius of $r=1\slash10,...,9\slash10$. The center of these circles is in the center of $G$ in the first figures, and then off the center in the rest of the figures. All the figures in this section were drawn by using the contour plot function \texttt{contour} in R-Studio and choosing a grid of the size 1,000$\times$1,000 test points. While we graphically only inspect circles and disks, we will also prove some properties for the $n$-dimensional metric balls.

\begin{figure}[!tbp]
  \centering
  \subfloat[$s_G$-metric circles.]{\includegraphics[width=0.49\textwidth]{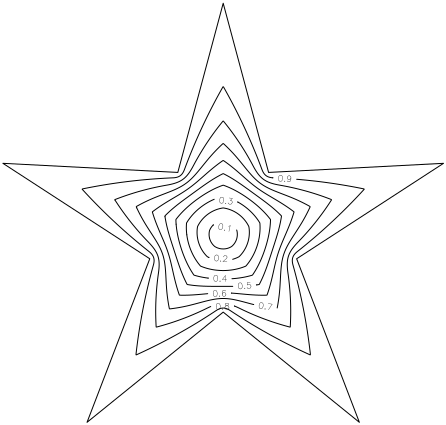}\label{fig4}}
  \hfill
  \subfloat[$j^*_G$-metric circles.]{\includegraphics[width=0.49\textwidth]{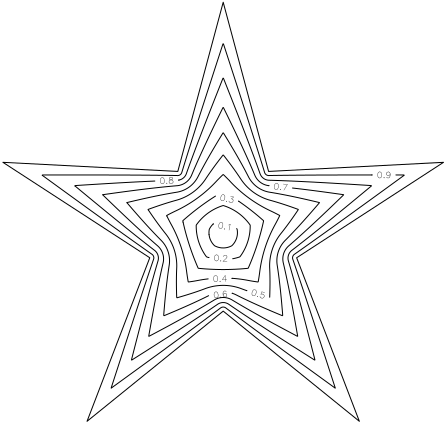}\label{fig5}}
  \\
  \subfloat[$p_G$-metric circles.]{\includegraphics[width=0.49\textwidth]{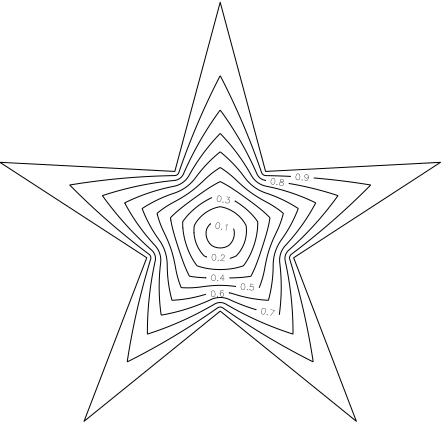}\label{fig6}}
  \hfill
  \subfloat[$t_G$-metric circles.]{\includegraphics[width=0.49\textwidth]{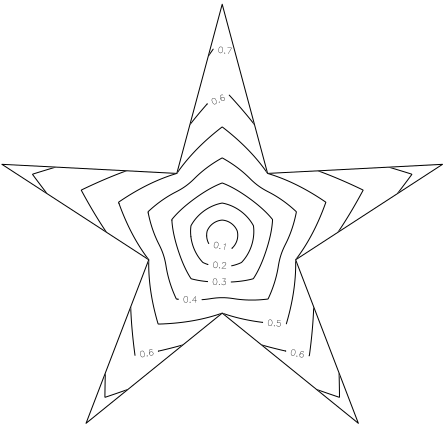}\label{fig7}}
  \caption{Circles in a five-pointed star domain with different metrics.}
\end{figure}

For several hyperbolic type metrics, the metric balls of small radii resemble Euclidean balls, but the geometric structure of the boundary of the domain begins to affect the shape of these balls when their radii grow large enough, see \cite[Ch. 13, pp. 239-259]{hkvbook}. By analysing this phenomenon more carefully, we can observe, for instance, that the balls are convex with radii less than some fixed $r_0>0$ in the case of some other metrics, see \cite[Thm 13.6, p. 241; Thm 13.41 p. 256; Thm 13.44, p. 258]{hkvbook}. From the figures of this section, we see that the four metrics studied here share this same property. In particular, we notice that, while the metric disks with small radii are convex and round like Euclidean disks, the metric circles with larger radii are non-convex and have corner points. By a corner point, we mean here such a point on the circle arc that has many possible tangents. 

In the following theorem, we will prove a property that can be seen from Figures \ref{fig6}, \ref{fig7}, \ref{fig10} and \ref{fig11}.

\begin{theorem}
If the domain $G$ is a polygon, then the corner points of the circles $S_p(x,r)$ and $S_t(x,r)$ are located on the the angle bisectors of $G$.
\end{theorem}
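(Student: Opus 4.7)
The plan is to show that away from the angle-bisector skeleton of $G$, the functions $y\mapsto t_G(x,y)$ and $y\mapsto p_G(x,y)$ are $C^1$ with nonzero gradient, so that the implicit function theorem renders the level sets locally smooth curves and no corner can appear there. For fixed $x\in G$, both metrics are rational expressions in $|x-y|$, $d_G(x)$ and $d_G(y)$; the Euclidean distance is $C^\infty$ in $y$ away from $y=x$, and it is standard that $d_G$ is differentiable at an interior point $y_0$ precisely when $y_0$ has a unique nearest boundary point $p_0\in\partial G$, with $\nabla d_G(y_0)=(y_0-p_0)/|y_0-p_0|$. Thus on $G\setminus(\{x\}\cup M(G))$, where $M(G)$ denotes the \emph{medial axis} consisting of the $y\in G$ with more than one nearest point on $\partial G$, both $t_G(x,\cdot)$ and $p_G(x,\cdot)$ are $C^1$, and a direct chain-rule computation shows their gradients are nonzero except possibly on a discrete set. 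Consequently every corner point of $S_t(x,r)$ or $S_p(x,r)$ must lie in $M(G)$.

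The second step is to identify $M(G)$ when $G$ is a polygon. If $y_0\in M(G)$ has two nearest boundary points $p_0,p_1$ lying in the relative interiors of two edges $e_0,e_1$ meeting at a common vertex $v$, then $[y_0,p_i]\perp e_i$ for $i=0,1$, and the equality $|y_0-p_0|=|y_0-p_1|$ then forces $y_0$ to lie on the interior angle bisector of $G$ at $v$. The remaining sub-cases --- two non-adjacent edges, two vertices, or one vertex and one edge --- are handled by elementary planar geometry; in each case the locus is a segment of the angle-bisector skeleton of $G$, including the bisectors emanating from the reflex vertices that are relevant to the star-shaped domain pictured in the figures.

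The main obstacle will be verifying the converse at points of $M(G)\cap S_t(x,r)$, namely that a corner actually appears there rather than the two one-sided parametrizations joining smoothly. Differentiating $t_G(x,\cdot)$ in closed form expresses its gradient through $\nabla d_G$, so the issue reduces to showing that the two candidate one-sided unit vectors $(y_0-p_i)/|y_0-p_i|$, $i=0,1$, are distinct, which is immediate since $p_0\neq p_1$ at a common distance from $y_0$ cannot lie on the same ray through $y_0$. A secondary point is to make precise the paper's informal notion of a ``corner point'' as a point admitting more than one tangent, but the preceding computation produces exactly such a configuration at every point of $M(G)\cap S_t(x,r)$, and likewise for $S_p(x,r)$.
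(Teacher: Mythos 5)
Your first step---a corner of $S_t(x,r)$ or $S_p(x,r)$ forces non-differentiability of $d_G$ at the corner point, since both metrics are smooth expressions in $|x-y|$, $d_G(x)$, $d_G(y)$ away from $y=x$---is a legitimate rigorization of what the paper does informally: the paper picks $y$ whose nearest vertex is $k$, with the adjacent sides $l_0,l_1$ nearest, writes $d_G(y)=\min\{d(y,l_0),d(y,l_1)\}$, and locates the kink on the locus $d(y,l_0)=d(y,l_1)$, i.e.\ on the bisector of $\angle(l_0,l_1)$. Two small caveats on your version: the hedge that the gradient is nonzero ``except possibly on a discrete set'' is not automatic---for the $t$-level set $(1-r)|x-y|-r\,d_G(x)-r\,d_G(y)=0$ the gradient $(1-r)u-rw$ (with $u,w$ unit vectors) vanishes exactly when $r=1/2$ and $u=w$, and such degenerate points would need to be ruled out on the level set rather than waved away, since a corner (several tangents) could in principle also occur at a singular point off the medial axis. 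Also, your third step is unnecessary: the theorem asserts only that corner points \emph{lie on} bisectors, a necessary condition; and in any case distinctness of the two one-sided gradients $(1-r)u-rw_0$ and $(1-r)u-rw_1$ does not preclude their being parallel, in which case the one-sided tangent lines coincide and no corner forms, so your converse argument is incomplete as well as superfluous.

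The genuine gap is in your second step: for a polygon, the medial axis $M(G)$ is \emph{not} contained in the angle-bisector skeleton unless $G$ is convex. The sub-cases you dismiss as elementary planar geometry produce loci that are not angle bisectors of $G$: a reflex vertex paired with a non-adjacent edge gives a \emph{parabolic arc} (focus the vertex, directrix the supporting line of the edge); two vertices give a segment of their perpendicular bisector; two non-adjacent edges give a bisector of the two supporting lines, which in general passes through no vertex of $G$. Reflex vertices are precisely the situation in the paper's five-pointed star, where substantial portions of $M(G)$ are parabolic---only the symmetry of the regular star makes the visible corners land on mirror lines. So as written your argument proves that corners lie on $M(G)$, a strictly larger set than claimed, and the concluding sentence of your second step (``in each case the locus is a segment of the angle-bisector skeleton, including the bisectors emanating from the reflex vertices'') is false rather than routine. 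To close the gap you must restrict, as the paper's proof tacitly does through its choice of $y$ (``$k$ is the vertex of $G$ that is closest to $y$ and there is no other side closer to $y$ than $l_0$ and $l_1$''), to corner points whose two nearest boundary features are sides adjacent at a common vertex; under that configuration your adjacent-edge computation does yield the bisector, and your proof then covers exactly the same (implicitly restricted) scope as the paper's.
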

\begin{proof}
Suppose $G$ has sides $l_0$ and $l_1$ that have a common endpoint $k$. Fix $x\in G$ and choose some point $y\in G$ so that $k$ is the vertex of $G$ that is closest to $y$ and there is no other side closer to $y$ than $l_0$ and $l_1$. Thus, $d_G(y)=\min\{d(y,l_0),d(y,l_1)\}$ and, for a fixed distance $|x-y|$, $d_G(y)$ is at maximum when $d(y,l_0)=d(y,l_1)$. The condition $d(y,l_0)=d(y,l_1)$ is clearly fulfilled when $y$ is on the bisector of $\angle(l_0,l_1)$ and, the greater the $d_G(y)$, the smaller the distances $p_G(x,y)$ and $t_G(x,y)$ are now. Consequently, if the circle $S_p(x,r)$ or $S_t(x,r)$ has a corner point, it must be located on an angle bisector of $G$.
\end{proof}

\begin{figure}[!tbp]
  \centering
  \subfloat[$s_G$-metric circles.]{\includegraphics[width=0.49\textwidth]{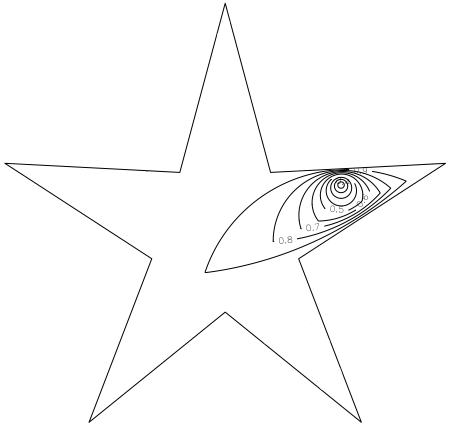}\label{fig8}}
  \hfill
  \subfloat[$j^*_G$-metric circles.]{\includegraphics[width=0.49\textwidth]{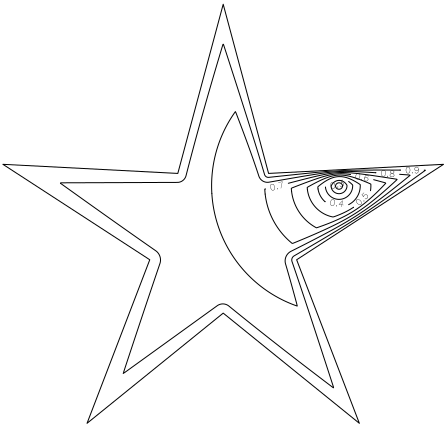}\label{fig9}}
  \\
  \subfloat[$p_G$-metric circles.]{\includegraphics[width=0.49\textwidth]{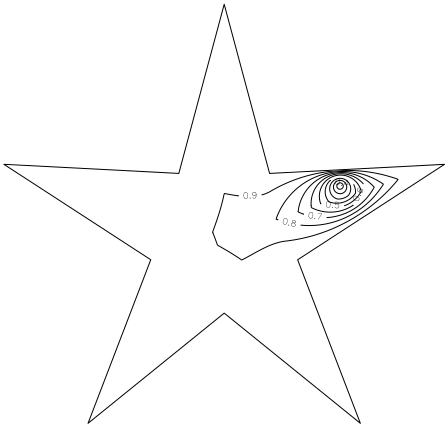}\label{fig10}}
  \hfill
  \subfloat[$t_G$-metric circles.]{\includegraphics[width=0.49\textwidth]{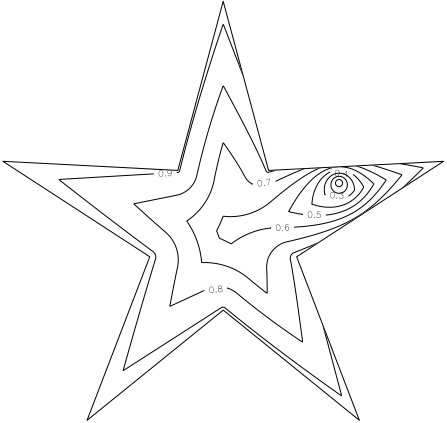}\label{fig11}}
  \caption{Circles in a five-pointed star domain with different metrics.}
\end{figure}

However, it can been seen from Figures \ref{fig8} and \ref{fig9} that the circles with $s_G$- and $j^*_G$-metrics can have corner points also elsewhere than on the angle bisectors of the domain $G$. We also notice that the circles in Figure \ref{fig9} clearly differ those in Figures \ref{fig8} and \ref{fig10}. This can be described with the concept of starlikeness, which is a looser form of convexity. Namely, a set $K$ is \emph{starlike} with respect to a point $x\in K$ if and only if the segment $[x,y]$ belongs to $K$ fully for every $y\in K$. In particular, the five-pointed star domain is starlike with respect to its center. The disks by the $j^*_G$- and $t_G$-metrics (Figures \ref{fig8} and \ref{fig10}) are clearly not starlike and, even if it cannot be clearly seen from Figure \ref{fig11}, there are disks drawn with the point pair function $p_G$ that are not starlike.

\begin{lemma}
There exist disks $B_{j^*}(x,r)$, $B_p(x,r)$, and $B_t(x,r)$ that are not starlike with respect to their center. 
\end{lemma}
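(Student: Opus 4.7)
The plan is to construct, for all three metrics $\eta\in\{j^*_G,p_G,t_G\}$ simultaneously, a single explicit triple $(G,x,r)$ together with a point $y\in B_\eta(x,r)$ such that the segment $[x,y]$ leaves $G$. Since starlikeness of $B_\eta(x,r)$ with respect to $x$ requires $[x,y]\subset B_\eta(x,r)\subset G$, producing one such $y$ immediately witnesses non-starlikeness for all three balls at once.

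For the construction I would take $G$ to be the simplest non-convex domain with a hole, namely the planar annulus $G=B^2(0,R)\setminus\overline{B^2(0,1)}$ for some $R>2$, and set $x=2e_1$, $y=-2e_1$. Then $d_G(x)=d_G(y)=1$ and $|x-y|=4$, so the definitions give explicit values $j^*_G(x,y)=t_G(x,y)=2/3$ and $p_G(x,y)=2/\sqrt{5}$, all strictly less than $1$. Fixing any $r$ strictly above the maximum of the three, but still strictly below $1$, places $y$ inside $B_{j^*}(x,r)$, $B_p(x,r)$, and $B_t(x,r)$ at once.

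The crucial observation is purely geometric and essentially immediate: the segment $[x,y]$ passes through the origin, which lies in the excluded disk $\overline{B^2(0,1)}\subset\R^2\setminus G$, so $[x,y]\not\subset G$. Consequently $[x,y]$ cannot be contained in any subset of $G$, in particular not in $B_\eta(x,r)$, and so each of the three balls fails to be starlike with respect to its center $x$.

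I do not foresee a real obstacle here. All quantities in the argument are explicit and elementary; the only routine verification is that the three values at $(x,y)$ are bounded strictly below $1$, which is automatic from $d_G(x),d_G(y)>0$ in the definitions. Note also that this argument works even though $p_G$ is only known to be a function, not necessarily a metric, on such annular domains: the set $B_p(x,r)=\{z\in G:p_G(x,z)<r\}$ is well-defined and the non-starlikeness conclusion is unaffected.
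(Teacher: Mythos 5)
Your proof is correct and takes essentially the same route as the paper's: both construct a non-convex domain and a pair $x,y$ with $[x,y]\not\subset G$ while $j^*_G(x,y)$, $p_G(x,y)$, $t_G(x,y)<1$ (the paper uses $\uhp^2$ with a rectangular appendage instead of an annulus, but the logic is identical). One small correction: $d_G(x)=d_G(y)=1$ requires $R\geq3$, not merely $R>2$ (for $2<R<3$ one gets $d_G(x)=R-2$), though this is harmless since, as you note, the three values are strictly below $1$ whenever $d_G(x),d_G(y)>0$.
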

\begin{proof}
Consider a domain $G=\uhp^2\cup\{z\in\C\text{ }|\text{ }-1<\text{Re}(z)<1,-3<\text{Im}(z)\leq0\}$. Fix $x=-2i$ and $y=3+i$. Clearly, $d_G(x)=d_G(y)=1$ and $|x-y|=3\sqrt{2}$. Consequently, 
\begin{align*}
j^*_G(x,y)=t_G(x,y)=\frac{3}{3+\sqrt{2}}<0.7,\quad p_G(x,y)=\frac{3}{\sqrt{11}}<0.91.    
\end{align*}
The segment $[x,y]$ does not clearly belong to $G$ fully and no disk in $G$ can contain this segment. However, its end point $y$ is clearly included in the disks $B_{j^*}(x,0.7)$, $B_t(x,0.7)$ and $B_p(x,0.91)$. Thus, we have found examples of non-starlike disks.
\end{proof}

There are no disks or balls like this for the triangular ratio metric. 

\begin{lemma}
\emph{\cite[p. 206]{hkvbook}} The balls $B_s(x,r)$ in any domain $G\subsetneq\R^n$ are always starlike with respect to their center $x$. 
\end{lemma}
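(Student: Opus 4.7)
The plan is to establish a stronger monotonicity statement: for any $y \in B_s(x,r)$ and any $w$ on the closed segment $[x,y]$, we have $s_G(x,w) \leq s_G(x,y)$. Since $s_G(x,y) < r$, this immediately yields $w \in B_s(x,r)$, proving starlikeness. So the whole task reduces to establishing this monotonicity along segments emanating from $x$.

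Write $w = (1-t)x + ty$ with $t \in [0,1]$, so $|x-w| = t|x-y|$ and $|y-w| = (1-t)|x-y|$. The central step is to prove the pointwise inequality
\begin{align*}
|x-z| + |z-w| \geq t\bigl(|x-z| + |z-y|\bigr) \qquad \text{for every } z \in \partial G.
\end{align*}
To see this, apply the triangle inequality to the triple $(z,w,y)$ to get $|z-w| \geq |z-y| - (1-t)|x-y|$, so
\begin{align*}
|x-z| + |z-w| \;\geq\; |x-z| + |z-y| - (1-t)|x-y|.
\end{align*}
The right-hand side can be rewritten as $t(|x-z|+|z-y|) + (1-t)\bigl[(|x-z|+|z-y|) - |x-y|\bigr]$, and the bracketed expression is non-negative by the triangle inequality applied to $(x,z,y)$. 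Since $1-t \geq 0$, the desired inequality follows.

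Taking the infimum over $z \in \partial G$ on both sides preserves the inequality (as $t \geq 0$), yielding
\begin{align*}
\inf_{z \in \partial G}\bigl(|x-z|+|z-w|\bigr) \;\geq\; t \inf_{z \in \partial G}\bigl(|x-z|+|z-y|\bigr).
\end{align*}
Dividing $|x-w| = t|x-y|$ by the left-hand side and using the above produces $s_G(x,w) \leq s_G(x,y)$, which completes the proof. The only non-routine step is recognizing the double use of the triangle inequality that converts the affine-combination structure of $w$ into a clean multiplicative factor of $t$; everything else is algebraic bookkeeping, so I expect no serious obstacles.
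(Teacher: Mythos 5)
Your proof is correct in its core and, unavoidably, takes a different route from the paper: the paper offers no proof of this lemma at all, quoting it directly from \cite[p. 206]{hkvbook}, so your argument supplies a self-contained elementary justification. What you prove is in fact stronger than starlikeness: the double triangle inequality gives the pointwise bound $|x-z|+|z-w|\geq t(|x-z|+|z-y|)$ for $w=(1-t)x+ty$, hence the radial monotonicity $s_G(x,w)\leq s_G(x,y)$ along every segment emanating from $x$ (for $t>0$; the case $t=0$ is trivial since $s_G(x,x)=0$). This shows that every sublevel set of $s_G(x,\cdot)$, not just the open balls, is starlike with respect to $x$. The computation itself is sound: the rewriting of $|x-z|+|z-y|-(1-t)|x-y|$ as $t(|x-z|+|z-y|)+(1-t)\bigl[(|x-z|+|z-y|)-|x-y|\bigr]$ checks out, and passing the factor $t\geq0$ through the infimum over $z\in\partial G$ is legitimate.

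One step is missing and should be patched: you evaluate $s_G(x,w)$ without verifying that $w\in G$, which is needed for the expression even to be defined, since $s_G$ lives on $G\times G$ and nothing a priori guarantees that the segment $[x,y]$ stays inside $G$. This is also where the restriction $r\leq1$ quietly enters: for $r>1$ one has $B_s(x,r)=G$, which need not be starlike, so the lemma implicitly concerns radii in $(0,1]$, the range of the metric. The patch is short: if some $z_0\in[x,y]$ lay on $\partial G$, then $\inf_{z\in\partial G}(|x-z|+|z-y|)\leq|x-z_0|+|z_0-y|=|x-y|$, forcing $s_G(x,y)=1\geq r$ and contradicting $y\in B_s(x,r)$; hence $[x,y]\cap\partial G=\varnothing$, and since the segment is connected and starts at $x\in G$ with $G$ open, it lies entirely in $G$. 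With this observation inserted before your monotonicity step, the proof is complete.
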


For several common hyperbolic type metrics $\eta_G$, the closed ball $\overline{B_\eta(x,M)}$ with $M=\eta_G(x,y)$ and $x,y\in G$ is always a compact subset of the domain $G$, see \cite[p. 79]{hkvbook}. For instance, the hyperbolic metric $\rho_G$ has this property \cite[p. 192]{hkvbook}. As can be seen from the figures, the $j^*$-metric, the triangular ratio metric and the point pair function share this property, too.

\begin{lemma}\label{sjp_touchesboundary}
The balls $B_{j^*}(x,r)$, $B_p(x,r)$ and $B_s(x,r)$ touch the boundary of the domain $G\subsetneq\R^n$ if and only if $r=1$.
\end{lemma}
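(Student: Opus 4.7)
The plan rests on two features shared by $j^*_G$, $p_G$, and $s_G$: each one takes values in $[0,1)$ on $G\times G$ (the extra term in the denominator is strictly positive whenever both points lie in $G$), and each one tends to $1$ whenever one of its arguments approaches $\partial G$. Together these give the biconditional: at $r=1$ the open ball already accumulates at $\partial G$ in the Euclidean sense, while for $r<1$ the ball is Euclidean-separated from $\partial G$.

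For the direction $r=1\Rightarrow$ ``touches'', I would exhibit a sequence $y_k\in B_\eta(x,1)$ with $y_k\to z_0\in\partial G$. Pick $z_0\in\partial G$ attaining $|x-z_0|=d_G(x)$ (it exists since $\partial G$ is closed and the distance function from $x$ attains its infimum on it) and set $y_k=x+(1-1/k)(z_0-x)$. The nearest-point property forces the open segment $(x,z_0)$ to lie in $G$, so $y_k\in G$ and $y_k\to z_0$. For $j^*_G$ and $p_G$ the bound $\eta_G(x,y_k)<1$ is automatic from $d_G(y_k)>0$; for $s_G$, the segment $[x,y_k]\subset(x,z_0)\subset G$ avoids $\partial G$, so the triangle inequality in $\R^n$ gives $\inf_{w\in\partial G}(|x-w|+|w-y_k|)>|x-y_k|$ and hence $s_G(x,y_k)<1$. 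Thus $y_k\in B_\eta(x,1)$ and $z_0$ lies in its Euclidean closure.

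For $r<1$, I would argue by contradiction: assume the Euclidean closure of $B_\eta(x,r)$ contains some $z\in\partial G$, and pick $y_k\in B_\eta(x,r)$ with $y_k\to z$. Then $|x-y_k|\to|x-z|\geq d_G(x)>0$ while $d_G(y_k)\to 0$. Reading off each definition gives $\eta_G(x,y_k)\to 1$, contradicting $\eta_G(x,y_k)<r<1$: for $j^*_G$ the term $2\min\{d_G(x),d_G(y_k)\}$ eventually equals $2d_G(y_k)\to 0$; for $p_G$ the cross term $4d_G(x)d_G(y_k)$ under the square root vanishes; and for $s_G$ the squeeze $|x-y_k|\leq \inf_{w\in\partial G}(|x-w|+|w-y_k|)\leq |x-z|+|z-y_k|$ (lower bound by triangle inequality, upper bound by the choice $w=z$) forces the boundary infimum to converge to $|x-z|$.

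The one subtlety is the forward direction for $s_G$, where one must rule out the degenerate possibility $s_G(x,y_k)=1$ arising from the approaching segment crossing $\partial G$; this is handled by the nearest-point choice, which guarantees $[x,y_k]\cap\partial G=\varnothing$. Everything else is routine calculation from the three definitions.
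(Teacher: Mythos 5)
Your proof is correct, and it takes a genuinely different route from the paper's. The paper argues at the level of the metric sphere and the equality case $\eta_G(x,y)=1$: since $j^*_G(x,y)=1$ and $p_G(x,y)=1$ force $d_G(x)=0$ or $d_G(y)=0$, and since $s_G(x,y)=1$ forces a boundary point onto the segment $[x,y]$ arbitrarily close to $y$, only the radius $r=1$ is compatible with the ball reaching $\partial G$. That argument is brief but treats boundary values informally --- it speaks of points $y\in S_\eta(x,r)$ with $d_G(y)=0$, which lie outside $G$, where the metrics are not defined. You instead make the meaning of ``touches'' explicit (the Euclidean closure of the ball meets $\partial G$) and verify both directions with sequences: the nearest-point segment supplies points $y_k\to z_0\in\partial G$ lying inside $B_\eta(x,1)$, and for $r<1$ the limit computation along $y_k\to z\in\partial G$ gives $\eta_G(x,y_k)\to1$, contradicting $\eta_G(x,y_k)<r$. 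Your version buys topological precision at the cost of length; the paper's buys brevity and isolates the exact equality cases of the three metrics.

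Two small repairs are needed. First, your opening claim that all three functions take values in $[0,1)$ on $G\times G$ is false for $s_G$: in a non-convex domain the extremal boundary point can lie on the segment $[x,y]$ even when $x,y\in G$ (take two points on opposite sides of a slit), giving $s_G(x,y)=1$. Your actual argument never uses this blanket claim --- as you note, the nearest-point choice keeps $[x,y_k]$ inside $G$ --- so only the framing sentence needs rewording. Second, the step $\inf_{w\in\partial G}(|x-w|+|w-y_k|)>|x-y_k|$ does not follow from the pointwise strict triangle inequality alone, since an infimum of strict inequalities need not be strict. Add that the infimum is attained: the sublevel sets of $w\mapsto|x-w|+|w-y_k|$ are compact ellipsoids and $\partial G$ is closed and non-empty, so there is a minimizer $w_0\in\partial G$; equality at $w_0$ would place $w_0$ on $[x,y_k]\subset G$, a contradiction. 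The same attainment remark also justifies the existence of your nearest point $z_0$.
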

\begin{proof}
If the ball $B_\eta(x,r)$, $\eta_G\in\{j^*_G, p_G, s_G\}$, touches the boundary of $G$, then there is some point $y\in S_\eta(x,r)$ with $d_G(y)=0$ and $j^*_G(x,y)=p_G(x,y)=s_G(x,y)=1$. Thus, we need to just prove that the balls with radius 1 always touch the boundary. Consider first the balls $B_{j^*}(x,r)$ and $B_p(x,r)$, with a radius $r=1$. Since, for all the points $y$ on their boundary,
\begin{align*}
j^*_G(x,y)&=1\quad\Leftrightarrow\quad 2\min\{d_G(x),d_G(y)\}=0\quad\Leftrightarrow\quad d_G(x)=0\text{ or }d_G(y)=0,\\
p_G(x,y)&=1\quad\Leftrightarrow\quad 4d_G(x)d_G(y)=0\quad\Leftrightarrow\quad d_G(x)=0\text{ or }d_G(y)=0,
\end{align*}
the balls $B_{j^*}(x,1)$ and $B_p(x,1)$ touch the boundary of $G$.

Consider yet the triangular ratio metric. Because only balls with radius $r=1$ can touch the boundary, $B_s(x,1)\cap\partial G=\varnothing$. However, if $s_G(x,y)=1$, there is some point $z\in\partial G$ such that $|x-y|=|x-z|+|z-y|$. This means that $z$ is on a line segment $[x,y]$ and, since $z\notin B_s(x,1)$, $z$ must be arbitrarily close to the point $y$. Thus, $d_G(y)=0$ and the ball $B_s(x,1)$ touches the boundary.
\end{proof}

However, the $t$-metric differs from the hyperbolic type metrics in this aspect: the closure of a $t$-metric ball is a compact set, if and only if the radius of the ball is less than $1\slash2$.

\begin{theorem}\label{thm_ttouchesboundary}
The balls $B_t(x,r)$ touch the boundary of the domain $G\subsetneq\R^n$ if and only if $r\geq\frac{1}{2}$.
\end{theorem}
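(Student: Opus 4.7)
My plan is to analyze how the $t$-distance $t_G(x,y)$ behaves as $y$ approaches $\partial G$. Fix $x\in G$ and write $c:=d_G(x)>0$. The key quantitative observation is that along the radial segment from $x$ toward a nearest boundary point, the $t$-distance to $x$ monotonically approaches $1/2$ from below and never exceeds it; this single calculation drives both directions.

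For the sufficiency direction (that $r\geq 1/2$ forces $B_t(x,r)$ to touch $\partial G$), I would first invoke compactness in $\R^n$ to pick a nearest boundary point $z\in\partial G$ with $|x-z|=c$. For each $b\in(0,c)$, let $y_b$ be the point on the segment $[x,z]$ at Euclidean distance $b$ from $z$, so $|x-y_b|=c-b$. I then claim $d_G(y_b)=b$ exactly: the inequality $d_G(y_b)\leq|y_b-z|=b$ is immediate, and if some $w\in\partial G$ satisfied $|y_b-w|<b$, the triangle inequality would give $|x-w|\leq|x-y_b|+|y_b-w|<c$, contradicting $d_G(x)=c$. Substituting into the definition yields
$$
t_G(x,y_b)=\frac{c-b}{(c-b)+c+b}=\frac{c-b}{2c}<\frac{1}{2}\leq r,
$$
so $y_b\in B_t(x,r)$ for every $b\in(0,c)$, and $y_b\to z\in\partial G$ as $b\to 0^+$, showing the ball touches the boundary.

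For the necessity direction, I would assume $B_t(x,r)$ touches $\partial G$ and extract a sequence $y_n\in B_t(x,r)$ with $d_G(y_n)\to 0$. Writing $a_n=|x-y_n|$ and $b_n=d_G(y_n)$, for any $z\in\partial G$ the triangle inequality applied at $y_n$ gives $c\leq|x-y_n|+|y_n-z|$; taking the infimum over $z\in\partial G$ yields $a_n\geq c-b_n$. Since $a\mapsto a/(a+c+b_n)$ is increasing on $[0,\infty)$, I obtain
$$
r> t_G(x,y_n)=\frac{a_n}{a_n+c+b_n}\geq\frac{c-b_n}{2c}.
$$
Passing to the limit $n\to\infty$ then gives $r\geq 1/2$, completing the equivalence.

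The main technical ingredient is the reverse triangle inequality $|x-y|\geq d_G(x)-d_G(y)$ for the distance-to-boundary function, together with the exact identification $d_G(y_b)=b$ along the radial segment to a nearest boundary point. Neither step is difficult in isolation, but together they pin down $1/2$ as precisely the sharp threshold distinguishing the $t$-metric balls from those of the hyperbolic-type metrics in Lemma \ref{sjp_touchesboundary}, and they make clear that the phenomenon is a direct consequence of the additive normalisation by $d_G(x)+d_G(y)$ in the definition of $t_G$.
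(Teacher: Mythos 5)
Your proof is correct, and it is actually tighter than the paper's own argument, which it matches in one half and genuinely improves in the other. For necessity, you and the paper use the same core inequality $d_G(x)\leq|x-y|+d_G(y)$ substituted into the definition of $t_G$; the only difference is that you work rigorously with a sequence $y_n\in B_t(x,r)$, $d_G(y_n)\to0$, and pass to the limit to get $r\geq\frac{1}{2}$, whereas the paper argues with an idealized point $y\in S_t(x,r)$ satisfying $d_G(y)=0$, which is not literally a point of $G$. For sufficiency, your route is different: the paper works on the sphere $S_t(x,\frac{1}{2})$, rewriting $t_G(x,y)=\frac{1}{2}$ as $d_G(y)=|x-y|-d_G(x)$ and concluding $d_G(y)=0$, but as printed that step is garbled --- it asserts $|x-y|=\frac{1}{2}$ and $d_G(x)\geq\frac{1}{2}$, which have no general meaning, and taken literally it would force every point of $S_t(x,\frac{1}{2})$ onto $\partial G$, which is false (in $G=\uhp^2$ with $x=i$ the point $y=2+i$ satisfies $t_G(x,y)=\frac{1}{2}$ yet $d_G(y)=1$). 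Your radial construction avoids this entirely: taking a nearest boundary point $z$ with $|x-z|=c=d_G(x)$ and $y_b\in[x,z]$ with $|y_b-z|=b$, the exact identity $d_G(y_b)=b$ (which you justify by the triangle inequality against $d_G(x)=c$) yields $t_G(x,y_b)=(c-b)/(2c)<\frac{1}{2}\leq r$, so $y_b\in B_t(x,r)$ and $y_b\to z\in\partial G$. This is constructive, treats all radii $r\geq\frac{1}{2}$ in one stroke rather than only $r=\frac{1}{2}$ plus monotonicity of balls, and in effect repairs the one weak spot in the published proof while confirming that $\frac{1}{2}$ is exactly the threshold coming from the additive normalisation $d_G(x)+d_G(y)$.
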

\begin{proof}
If $B_t(x,r)$ touches the boundary, there must be some $y\in S_t(x,r)$ such that $d_G(y)=0$. Since $d_G(x)\leq|x-y|+d_G(y)$, it follows that
\begin{align*}
r=t_G(x,y)=\frac{|x-y|}{|x-y|+d_G(x)+d_G(y)}\geq\frac{|x-y|}{|x-y|+|x-y|+0+0}=\frac{1}{2}.    
\end{align*}
Thus, only balls $B_t(x,r)$ with a radius $r\geq\frac{1}{2}$ can touch the boundary of $G$. 

Let us yet prove that the balls $B_t(x,\frac{1}{2})$ always touch the boundary of $G$. For any point $y\in S_t(x,\frac{1}{2})$, it holds that $|x-y|=1\slash2$ and
\begin{align*}
t_G(x,y)=\frac{|x-y|}{|x-y|+d_G(x)+d_G(y)}=\frac{1}{2}\quad
\Leftrightarrow\quad d_G(y)=|x-y|-d_G(x).
\end{align*}
Since only balls $B_t(x,r)$ with $r\geq\frac{1}{2}$ can touch the boundary of $G$, $B_t(x,\frac{1}{2})\cap\partial G=\varnothing$ and $d_G(x)\geq1\slash2$. Thus, $d_G(y)=1\slash2-d_G(x)\leq0$ and, since the distances cannot be negative, $d_G(y)=0$ and the ball $B_t(x,\frac{1}{2})$ truly touches the boundary of $G$. 
\end{proof}

The result above is visualized in Figures \ref{fig7} and \ref{fig11}.

\addcontentsline{toc}{section}{References} 
\renewcommand{\refname}{References} 


\end{document}